\newtheorem{theorem}           {Theorem}
\newtheorem{prop+}           {Proposition}
\newtheorem{coro+}           {Corollary}
\newtheorem{lemma}           {Lemma}
\newtheorem{conjecture}      {Conjecture}
\theoremstyle{definition}
\newtheorem{rema+}           {Remark}
\newtheorem{defi+}           {Definition}
\newtheorem{corollary}[theorem]{Corollary}
\def\T{{T}}
\def\S{{S}}
\def\A{{A}}
\def\Z{{Z}}
\newcommand\al{\alpha}
\newcommand\vpil[1]{\overset{\leftarrow}{#1}}
\newcommand\hpil[1]{\overset{\rightarrow}{#1}}
\def\notto{\mathrel{\not\to}}
\newcommand\fpn{\ensuremath{f_n(p)}}
\newcommand\hpn{\ensuremath{h_n(p)}}
\newcommand\gnp{\ensuremath{G(n,p)}}
\newcommand\gnm{\ensuremath{G(n,m)}}
\newcommand\ggnm{{\ensuremath{\vec G(n,m)}}}
\newcommand\ggnp{{\ensuremath{\vec G(n,p)}}}
\renewcommand\P{\operatorname{\mathbb P{}}}
\newcommand\nn{\binom n2}
\newenvironment{romenumerate}[1][0pt]{
\addtolength{\leftmargini}{#1}\begin{enumerate}
 }{\end{enumerate}}
\title{Correlation of paths between distinct vertices in a randomly oriented graph.}
\author{Madeleine Leander}
\date{\today}
\address{Department of Mathematics, Stockholm University, SE-106 91
  Stockholm, Sweden}
\email{madde@math.su.se}
\author{Svante Linusson}
\address{Department of Mathematics, KTH-Royal Institute of Technology, SE-100 44
  Stockholm, Sweden}
\email{linusson@math.kth.se}
\begin{document}

\begin{abstract}
We prove that in a random tournament the events $\{s\rightarrow a\}$ and $\{t\rightarrow b\}$ are positively correlated, for distinct 
vertices $a,s,b,t \in K_n.$ It is also proven that the correlation between the events $\{s\rightarrow a\}$ and $\{t\rightarrow b\}$ in the 
random graphs $G(n,p)$ and $G(n,m)$ with random orientation is positive for every fixed $p>0$ and sufficiently large $n$ 
(with $m=\left\lfloor p \binom{n}{2}\right\rfloor$). We conjecture it to be positive for all $p$ and all $n$. 
An exact recursion for $\P(\{s\rightarrow a\} \cap \{t\rightarrow b\})$ in $\gnp$ is given. 
\end{abstract}

\maketitle

\section{Introduction}

Let $G$ be a graph on $n$ vertices and $a,b,s,t \in V(G)$ four different vertices in the graph. Let further every edge in $G$ be oriented either way with the same probability independently of each other. This model was first considered in 
\cite{M81}, and a similar model was discussed in \cite{G00}. We will study the correlation between the two events $\{s\rightarrow a\}$ and $\{t\rightarrow b\}$. Our main result is that these events are positively correlated for the complete graph and for two natural models of random graphs. Note however that it is easy to construct examples when the correlation will be negative, e.g. if $G$ is the path on four vertices with edges $sb,ba,at$. 

The events $\{s\rightarrow a\}$ and $\{s\rightarrow b\}$ can be shown to have positive correlation for any 
vertices in any graph  $G$. In \cite{AL} it was  proven, somewhat surprisingly, that also the events $\{s\rightarrow a\}$
and $\{b\rightarrow s\}$ have positive correlation in $K_n$, when $n\ge 5$, but negative correlation if 
$G$ is a tree or a cycle. Further, in \cite{AJL} it was shown that in the random graph models $\gnp$ and
$\gnm$ for a fixed probability $p$ ($= m/\nn)$  and large enough $n$ the correlation between 
$\{s\rightarrow a\}$ and $\{b\rightarrow s\}$ is negative if $p$ is below a critical value and positive if $p$ is above the critical value. The critical value in $\gnp$ was exactly $1/2$ and  in $\gnm$ approx. $0.799$. 

The situation in this paper turns out to be different. We prove positive correlation when $G$ is $K_n$ and 
in $\gnp$ and $\gnm$ for fixed $p>0$ and $n$ sufficiently large. We conjecture that it is in fact non-negative for all pairs $n,p$.

For technical reasons we will study the complementary events $A:=\{s\nrightarrow a\},$ the event that no directed path from $s$ to $a$ exists, and $B:=\{t\nrightarrow b\}$.  Note that the events $A$ and $B$ have the same covariance as the events $\{s\rightarrow a\}$ and $\{t\rightarrow b\}$.

The paper is organized as follows. In section \ref{corrKn} we present a lower bound for $\P(A\cap B)$ and prove that $A$ and $B$ are positively correlated for $n\geq 4.$ An intuitive explanation is due to the fact that the biggest terms of $\P(A)$ comes from when no edges are directed from $s$ and when no edges are directed to $a,$ analogously for $\P(B).$ We also show that the relative covariance of the two events converges to $2/3$ as $n\rightarrow \infty.$ 

In section \ref{gnp} we consider the random graph $G(n,p)$ on $n$ vertices. It is a random graph model in which every edge exists with probability $p$ independently of each other and then every existing edge is directed in either of the two directions with the same probability independently of all other edges. 
Note that the two random prosesses can be combined in two different ways. In this paper we study the joined probability space of $G(n,p)$ and that of egde orientations, which we call $\ggnp$. This will be refered to as
 the annealed version. The other possibility, the quenched version, will be briefly discussed in section \ref{quenched}. We prove that for fixed $p>0$ and sufficiently large $n$ the events $A$ and $B$ will be positively correlated in $\ggnp.$

In Section \ref{gnm} we study the random graph model $\gnm$, with uniform distribution among
all graphs with $n$ vertices and $m$ edges. Note that in this graph the edges does not exist independently 
of each other since the number of edges in the graph is fixed. As before every existing edge is directed 
in either way with equal probability independent of all other edges. We prove that for fixed $p=m/\nn$the events $A$ and $B$ are positively correlated for sufficiently large $n$. 

In Section \ref{S:rec} we give an exact recursion to compute $\P(A\cap B)$ in $\gnp$ which supports our conjecture that the correlation is positive for all values of $n$ and $p$.

The problems studied here was first motivated by the, so far in vain, attempts to prove the so called
bunkbed conjecture, see \cite{L1}.

\section{Correlation in a random tournament}
\label{corrKn}

To show that the correlation between $A$ and $B$ is positive we need a sufficient upperbound for $\P(A)$ (and $\P(B)$)  and a lower bound for $\P(A\cap B).$ Both an upper bound and a lower bound for $\P(A)$ was given in \cite{AL}:
\begin{lemma}[Theorem 2.1 in \cite{AL}]
\label{sl}
For all $n\geq 2$,
$$\left( \frac12 \right) ^{n-2} \left(1- \left(\frac12 \right) ^{n-1} \right)\leq \P(A) \leq \left(\frac12\right) ^{n-2}\left(1+3.2\cdot \left(\frac78\right) ^{n-1}\right).$$
\end{lemma}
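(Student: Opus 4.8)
The plan is to derive an exact expression for $\P(A)$ by conditioning on the set of vertices reachable from $s$, and then bound the resulting sum from both sides. Let $R\subseteq V(K_n)$ be the (random) set of vertices reachable from $s$ by a directed path, with $s\in R$ by convention; then $A=\{s\nrightarrow a\}$ is exactly $\{a\notin R\}$. For a fixed set $R$ with $s\in R$, $a\notin R$ and $|R|=k$, the event $\{\text{reachable set}=R\}$ splits into two conditions on disjoint edge sets: all $k(n-k)$ edges between $R$ and its complement point into $R$, and inside the induced tournament on $R$ the vertex $s$ reaches all of $R$. Writing $r_k$ for the probability that a fixed vertex reaches every other vertex in a uniformly random tournament on $k$ vertices, independence of the two edge sets and the count $\binom{n-2}{k-1}$ of admissible $R$ give
\[
\P(A)=\sum_{k=1}^{n-1}\binom{n-2}{k-1}\,r_k\,\Bigl(\tfrac12\Bigr)^{k(n-k)}.
\]

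For the lower bound I would avoid this formula and argue directly. Both $\{s\text{ is a sink}\}$ and $\{a\text{ is a source}\}$ are contained in $A$: if no edge leaves $s$ then no directed path starts at $s$, and if no edge enters $a$ then no directed path ends at $a$. Each event fixes the orientation of the $n-1$ edges at the relevant vertex, so each has probability $(1/2)^{n-1}$; their intersection fixes $2n-3$ distinct edges (the two edge sets share only the edge $sa$, whose forced orientation $a\to s$ is consistent), hence has probability $(1/2)^{2n-3}$. Inclusion--exclusion then yields
\[
\P(A)\ \ge\ 2\Bigl(\tfrac12\Bigr)^{n-1}-\Bigl(\tfrac12\Bigr)^{2n-3}=\Bigl(\tfrac12\Bigr)^{n-2}\Bigl(1-\Bigl(\tfrac12\Bigr)^{n-1}\Bigr),
\]
which is precisely the claimed lower bound.

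For the upper bound I would return to the identity and use the crude estimate $r_k\le 1$. The key algebraic observation is the factorization $k(n-k)=(n-1)+(k-1)(n-1-k)$, which pulls out a common factor $(1/2)^{n-1}$. Substituting $j=k-1$ and setting $N=n-2$,
\[
\P(A)\ \le\ \Bigl(\tfrac12\Bigr)^{n-1}\sum_{j=0}^{N}\binom{N}{j}\Bigl(\tfrac12\Bigr)^{j(N-j)}=\Bigl(\tfrac12\Bigr)^{n-1}\bigl(2+T_N\bigr),
\]
where the $j=0$ and $j=N$ terms supply the $2$ and $T_N=\sum_{j=1}^{N-1}\binom{N}{j}2^{-j(N-j)}$ collects the rest. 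Since $(1/2)^{n-1}\cdot 2=(1/2)^{n-2}$, the stated bound reduces to the clean tail estimate $T_N\le 6.4\,(7/8)^{N+1}$.

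The main obstacle is this last estimate, where the constants $3.2$ and $7/8$ are calibrated. I would isolate the two largest terms $j=1,\,N-1$, which together equal $2N\,2^{-(N-1)}$, and bound the block $2\le j\le N-2$ using $j(N-j)\ge 2(N-2)$ with $\sum_j\binom{N}{j}\le 2^N$, giving a contribution of order $2^{-N}$. Thus $T_N=O(N\,2^{-N})$, which is $o\bigl((7/8)^N\bigr)$ because $2^{-N}$ decays faster than $(7/8)^N$; the inequality then holds with room for all large $N$. The finitely many small cases (say $N\le 5$, i.e.\ $n\le 7$) I would check by direct computation, which both closes the argument and certifies the admissible constant. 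The one delicate point is that the block bound is wasteful for small $N$, so the split between direct verification and asymptotic estimation must be made carefully enough to support the stated $3.2$.
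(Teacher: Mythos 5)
The paper does not prove this lemma: it is quoted verbatim from Theorem~2.1 of \cite{AL}, so there is no in-paper proof to compare against. Your argument is correct and self-contained, and it follows what is essentially the standard route (the same out-cluster decomposition that reappears as $d_p(n,k)$ in Section~\ref{S:rec} of the paper, specialized to $p=1$): the lower bound by inclusion--exclusion on the events that $s$ has no out-edge or $a$ has no in-edge (whose intersection indeed fixes only $2n-3$ edges, the edge $sa$ being shared with consistent orientation), and the upper bound from the exact expansion over the reachable set with the trivial estimate $r_k\le 1$. The identity $k(n-k)=(n-1)+(k-1)(n-1-k)$ and the reduction to $T_N\le 6.4\,(7/8)^{N+1}=5.6\,(7/8)^N$ are right, and the tail estimate you sketch does close with your stated cutoff: isolating $j=1$ and $j=N-1$ and bounding the middle block by $2^N\cdot 2^{-2(N-2)}$ gives $T_N\le (4N+16)\,2^{-N}$, and $4N+16\le 5.6\,(7/4)^N$ already holds for all $N\ge 3$ (at $N=3$ it is $28\le 30.01$, and the right side grows faster thereafter), while $N\le 2$ is checked by hand ($T_2=1$, $T_1=0$). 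The only cosmetic slip is that for $n=2$ (i.e.\ $N=0$) the sum equals $1$ rather than $2+T_0$, but the claimed inequality is far weaker than needed there, so nothing is lost.
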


The next lemma gives a lowerbound for the probability of the event $A\cap B.$ 

\begin{lemma}
\label{lowerB}
For all $n\geq 4$,
$$\P(A\cap B)\geq \left(\frac12\right)^{2n-4} \left( 3 - \left(\frac12\right)^{2n-7}-\left(\frac12\right)^{n-4}\right)$$ 

\end{lemma}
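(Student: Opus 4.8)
The plan is to bound $\P(A\cap B)$ from below by the probability of a union of four explicit sub-events, each contained in $A\cap B$, and then to evaluate that union \emph{exactly} by inclusion--exclusion. The choice of events is dictated by the heuristic quoted in the introduction: the dominant contributions to $\P(A)$ come from $s$ being a sink (all edges at $s$ oriented inward) or $a$ being a source (all edges at $a$ oriented outward), and analogously for $B$. Pairing these four corner configurations, two pairings are consistent and I keep them as is: let $E_3$ be the event that $s$ is a sink and $b$ is a source, and $E_4$ the event that $a$ is a source and $t$ is a sink. The pairings ``$s$ and $t$ both sinks'' and ``$a$ and $b$ both sources'' are contradictory on the edge $st$ (respectively $ab$), so I replace them by their natural consistent relaxations: let $E_1$ be the event that $\{s,t\}$ is \emph{closed}, i.e.\ each of the $2n-4$ edges from $\{s,t\}$ to the remaining vertices points into $\{s,t\}$, and $E_2$ the event that each of the $2n-4$ edges from $\{a,b\}$ to the rest points out of $\{a,b\}$ (equivalently, $V\setminus\{a,b\}$ is closed).

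First I would verify $E_i\subseteq A\cap B$ for each $i$. If $\{s,t\}$ is closed then neither $s$ nor $t$ can reach anything outside $\{s,t\}$, so $s\nrightarrow a$ and $t\nrightarrow b$; dually, under $E_2$ no vertex outside $\{a,b\}$ reaches $a$ or $b$. Under $E_3$, the sink $s$ gives $s\nrightarrow a$ and the source $b$ gives $t\nrightarrow b$, and symmetrically for $E_4$. Hence $\P(A\cap B)\ge \P(E_1\cup E_2\cup E_3\cup E_4)$.

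The union is then computed by counting, for each intersection, the number of edges whose orientation is forced (after checking that the forcing is consistent), each such edge contributing a factor $\frac12$. The singleton terms give $\P(E_1)=\P(E_2)=\left(\frac12\right)^{2n-4}$ and $\P(E_3)=\P(E_4)=\left(\frac12\right)^{2n-3}$, so the first inclusion--exclusion sum is $S_1=3\left(\frac12\right)^{2n-4}$. The crucial simplification is that $E_3\cap E_4=\varnothing$: both would force $s$ and $t$ to be sinks, contradicting each other on $st$. This also kills two of the four triple intersections and the quadruple intersection, so $S_4=0$. For the rest one finds $\P(E_1\cap E_2)=\left(\frac12\right)^{4n-12}$ (the four edges between $\{s,t\}$ and $\{a,b\}$ are shared and consistently oriented), each of the four nonempty products $\P(E_i\cap E_j)$ with $i\in\{1,2\}$, $j\in\{3,4\}$ equals $\left(\frac12\right)^{3n-6}$, and the two surviving triples each equal $\left(\frac12\right)^{4n-10}$.

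Assembling $\P(E_1\cup\cdots\cup E_4)=S_1-S_2+S_3-S_4$, the arithmetic collapses exactly onto the claim: the four $\left(\frac12\right)^{3n-6}$ terms sum to $\left(\frac12\right)^{3n-8}=\left(\frac12\right)^{2n-4}\left(\frac12\right)^{n-4}$, while $-\left(\frac12\right)^{4n-12}+2\left(\frac12\right)^{4n-10}=-\left(\frac12\right)^{4n-11}=-\left(\frac12\right)^{2n-4}\left(\frac12\right)^{2n-7}$, which reproduces the stated bound. I expect the difficulty to lie in two places. First, in choosing the right events: the leading constant $3$ appears only because the two contradictory ``double'' configurations are \emph{relaxed} to the closed-set events $E_1,E_2$ (worth $\left(\frac12\right)^{2n-4}$ each) rather than discarded. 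Second, in the edge bookkeeping: the final expression emerges only because the exact exponents $4n-12$, $3n-6$, $4n-10$ combine precisely, so each intersection must be counted with care over shared edges. A final check is the degenerate case $n=4$, where $E_1=E_2$ and there are no vertices outside $\{s,t,a,b\}$; the inclusion--exclusion identity nonetheless holds and returns $3/32$, matching the bound.
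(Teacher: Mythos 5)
Your proof is correct and takes essentially the same approach as the paper: your events $E_1,E_2,E_3,E_4$ are exactly the paper's events $O_{s,t}=\emptyset$, $I_{a,b}=\emptyset$, $I_b=O_s=\emptyset$, $I_a=O_t=\emptyset$, and your inclusion--exclusion computation (including the key observation that $E_3\cap E_4=\emptyset$, which kills two triples and the quadruple term) matches the paper's term for term.
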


\begin{proof}
Define $I_{a,b}$ to be the set of points in $[n]\backslash \{a,b\}$ that can reach $a$ or $b$ in one step, that is with a single edge directed to $a$ or $b$. Similarily define $O_{s,t}$ to be the set of points in $[n]\backslash \{s,t\}$ that can be reached from $s$ or $t$ in one step. Define further $I_a$ and $I_b$ to be the set of points in $[n]\backslash \{a\}$ and $[n]\backslash \{b\}$ respectively that can reach $a$ and $b$ respectively in one step, and finally in the same way define $O_s$ and $O_t$ to be the set of points in $[n]\backslash \{s\}$ and $[n]\backslash \{t\}$ respectively that can be reached from $s$ and $t$ respectively in one step. 

The four events $I_{a,b}=\emptyset, O_{s,t}=\emptyset, I_a=O_t=\emptyset$ and $I_b=O_s=\emptyset$ all implies $A\cap B$. Hence we have $$\P(A\cap B)\geq \P((I_{a,b}=\emptyset)\cup(O_{s,t}=\emptyset)\cup(I_a=O_t=\emptyset)\cup(I_b=O_s=\emptyset)).$$ 
By inclusion-exclusion we have $$\P((I_{a,b}=\emptyset)\cup(O_{s,t}=\emptyset)\cup(I_a=O_t=\emptyset)\cup(I_b=O_s=\emptyset))=$$
\begin{eqnarray*}
&=& 2\cdot \left(\frac12 \right) ^{2(n-2)} +2\cdot\left(\frac12 \right) ^{2(n-1)-1}  -\\
&-& \left( \left(\frac12 \right) ^{4(n-2)-4} +4\cdot \left(\frac12 \right) ^{3n-6} \right) + \\
&+&2\cdot \left(\frac12 \right) ^{4n-10}=\\
&=& \left(\frac12 \right) ^{2n-4}\left( 3- \left(\frac12 \right) ^{2n-7}  - \left(\frac12 \right) ^{n-4} \right) 
\end{eqnarray*}
since the events $(I_a=\emptyset)$ and $ (I_b=\emptyset)$ are disjoint and so are the events 
$(O_s=\emptyset)$ and $(O_t=\emptyset)$. 
\end{proof}

\begin{theorem}
The events $A=\{s\nrightarrow a\}$ and $B=\{t\nrightarrow b\}$ are positively correlated for $n\geq 4.$
\end{theorem}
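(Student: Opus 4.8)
The plan is to establish positive correlation in the form $\P(A\cap B) \geq \P(A)\P(B)$. First I would record that, by the full $S_n$-symmetry of $K_n$ under random orientation, the events $A=\{s\nrightarrow a\}$ and $B=\{t\nrightarrow b\}$ have equal probability: any permutation of the vertices carrying the ordered pair $(t,b)$ to $(s,a)$ is measure-preserving, so $\P(A)=\P(B)$. It therefore suffices to prove $\P(A\cap B)\geq \P(A)^2$.

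The engine of the argument is to combine the lower bound for $\P(A\cap B)$ from Lemma \ref{lowerB} with the upper bound for $\P(A)$ from Lemma \ref{sl}. Both expressions carry the common factor $\left(\frac12\right)^{2n-4}$, so after dividing it out the desired inequality $\P(A\cap B)\geq \P(A)^2$ reduces to the elementary estimate
$$3 - \left(\frac12\right)^{2n-7} - \left(\frac12\right)^{n-4} \;\geq\; \left(1 + 3.2\left(\tfrac78\right)^{n-1}\right)^2.$$
As $n\to\infty$ the left-hand side increases to $3$ while the right-hand side decreases to $1$. The decisive feature is the mismatch of the leading constants, $3$ against $1$: this is exactly what forces the covariance to be positive and, in the limit, produces the relative covariance $2/3$ announced in the introduction.

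The main obstacle is the small-$n$ regime. The upper bound on $\P(A)$ carries the error term $3.2\left(\tfrac78\right)^{n-1}$, which is far from negligible for small $n$; already at $n=4$ it makes the right-hand side above exceed $9$, so the displayed inequality is false there and the crude bounds cannot close the argument. I would therefore split into two ranges. Once $3.2\left(\tfrac78\right)^{n-1}$ is pushed below $\sqrt3-1$, which happens for $n$ of modest size, the displayed inequality holds and finishes the proof for all $n$ beyond that explicit threshold. For the finitely many remaining values the gap between the two bounds is widest, and here I would instead verify $\P(A\cap B)\geq \P(A)^2$ by computing the exact probabilities: both $\P(A)$ and $\P(A\cap B)$ are finite sums over the orientations of the $\binom n2$ edges and can be evaluated for each such $n$, either via the exact formulas and recursions available for these quantities or by a direct finite check. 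Thus the asymptotics are routine once the leading-constant mismatch is identified; the genuine care lies in dispatching the small cases, where the two bounds are simply too loose to be compared.
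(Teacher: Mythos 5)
Your proposal is correct and follows essentially the same route as the paper: combine the upper bound of Lemma \ref{sl} with the lower bound of Lemma \ref{lowerB}, observe that the resulting inequality holds once $3.2\left(\tfrac78\right)^{n-1}$ is small enough (the paper's threshold is $n\geq 13$, matching your estimate), and dispatch the finitely many small cases by exact computation --- the paper does this for $4\leq n\leq 12$ using the recursion for $\P(A)$ from \cite{AL} together with Lemma \ref{lowerB}, which is a minor variant of your suggestion to compute both probabilities exactly.
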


\begin{proof}
From Lemmas \ref{sl} and \ref{lowerB} we get
\begin{eqnarray*}
&&\P(A\cap B)-\P(A)\P(B)=\P(A\cap B)-(\P(A))^2 \geq \\
&\geq & \left(\frac12 \right) ^{2n-4}\left( 3- \left(\frac12 \right) ^{2n-7}  - \left(\frac12 \right) ^{n-4}  -\left(1+3.2\cdot \left(\frac78 \right) ^{n-1}\right)^2 \right)\\
&>& 0 \text{ when } n\geq 13.
\end{eqnarray*}
The cases $4\leq n\leq 12$ were checked using Lemma \ref{lowerB} and the values of $\P(A)$ computed by recursion in \cite{AL}. The (rounded) values used are listed below.
\begin{center}
\begin{tabular}{l l}
$n$ & $\P(A)$\\
\hline
4&0.25\\
5&0.146484\\
6&0.076416\\
7&0.036942\\
8&0.017427\\
9&0.008309\\
10&0.004038\\
11&0.001988\\
12&0.000986
\end{tabular}
\end{center}

\end{proof}

We can also give an upper bound for $\P(A\cap B)$ to show that $\lim_{n\rightarrow \infty} \P(A\cap B)\cdot 2^{2n-4}=3$ and $\lim_{n \rightarrow \infty} \frac{\P(A\cap B)-\P(A)\cdot \P(B)}{\P(A\cap B)}=\frac23$ . These
statements are special cases of Theorems \ref{gnp2} and \ref{rel} below. 



\section{Random orientations of $G(n,p).$}
\label{gnp}

Let as usual $\gnp$ be the random graph in which every edge exists with probability $p$ independently of the other edges.  We also let every edge be directed in either way with equal probability independently of each other. We will call the corresponding random graph model $\ggnp$. For this section, let $x=p/2$ be the probability of one edge to exist and be directed in a certain way and let $y=1-x$ be the probability of an edge not to exist in a certain direction. We will adopt the usual notation $f\sim g$ to denote  that the quotient of $f$ and $g$ goes to a constant. 
In \cite{AJL} the following lemma was proven.

\begin{lemma}[Lemma 4.2 in \cite{AJL}]
\label{sl2ab} For any vertices $s,a$ in $\ggnp$ 
$$\P(A) \sim   2y^{n-1}.$$
\end{lemma}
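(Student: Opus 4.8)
The plan is to establish matching lower and upper bounds, both of order $y^{n-1}$ with leading constant $2$. For the lower bound I would use two explicit events, each of which forces $A=\{s\nrightarrow a\}$: the event $U$ that $s$ has no out-edge (none of the $n-1$ directions $s\to v$ is present), and the event $V$ that $a$ has no in-edge. Each has probability $y^{n-1}$, and their intersection constrains exactly $2n-3$ distinct directed slots, the direction $s\to a$ being shared, so $\P(U\cap V)=y^{2n-3}$. Since $U,V\subseteq A$, inclusion--exclusion gives
\[
\P(A)\ge \P(U\cup V)=2y^{n-1}-y^{2n-3}=2y^{n-1}\bigl(1-\tfrac12 y^{n-2}\bigr),
\]
which is $2y^{n-1}(1-o(1))$ because $y<1$ is fixed whenever $p>0$.

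For the matching upper bound I would decompose on the set $R$ of vertices reachable from $s$ (with $s\in R$), so that $A$ is exactly the event $\{a\notin R\}$. For a fixed set $S$ with $s\in S$ and $a\notin S$, the event $R=S$ requires that every vertex of $S$ be reachable from $s$ using only edges inside $S$ \emph{and} that no directed edge leave $S$; these two requirements involve disjoint sets of slots and are therefore independent, and the second one constrains $|S|\,(n-|S|)$ slots, each empty with probability $y$. Writing $k=|S|$ and bounding the internal-reachability probability by $1$, this yields
\[
\P(A)=\sum_{S\,:\,s\in S,\ a\notin S}\P(R=S)\ \le\ \sum_{k=1}^{n-1}\binom{n-2}{k-1}\,y^{k(n-k)}.
\]
The two boundary terms $k=1$ and $k=n-1$ each equal $y^{n-1}$ (they correspond exactly to $U$ and to $V$), which accounts for the constant $2$.

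It remains to show that the middle terms $2\le k\le n-2$ sum to $o(y^{n-1})$, which I expect to be the main obstacle. The key mechanism is that the exponent $k(n-k)$, counting the slots that must be empty, is at least $2(n-2)$ on this range and grows quadratically toward the centre, while the entropy $\binom{n-2}{k-1}$ coming from the choice of $S$ is only exponential in $n$. Using the symmetry $\binom{n-2}{k-1}y^{k(n-k)}=\binom{n-2}{n-k-1}y^{(n-k)k}$ it suffices to treat $2\le k\le n/2$. Here I would split off $k=2$ (contributing $(n-2)y^{2n-4}=o(y^{n-1})$ since $y^{n-3}\to0$), and for $3\le k\le n/2$ use $\binom{n-2}{k-1}\le n^{k-1}$ together with $y^{k(n-k)}\le y^{kn/2}$, so that the remaining sum is bounded by a geometric series in $ny^{n/2}$. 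Because $y=1-p/2<1$ is fixed, $ny^{n/2}\to0$ superpolynomially, so this sum is negligible compared with $y^{n-1}$. Combining the two bounds gives $\P(A)=2y^{n-1}(1+o(1))$, that is, $\P(A)\sim 2y^{n-1}$.
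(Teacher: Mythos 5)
Your argument is correct. Note that the paper does not prove this statement itself; it simply imports it as Lemma 4.2 of \cite{AJL}, so there is no in-paper proof to compare against line by line. Your proof is a sound self-contained derivation: the lower bound via the two events ``no out-edge at $s$'' and ``no in-edge at $a$'' (with the correct count of $2n-3$ independent directed slots for the intersection, since no edge ever has both of its directions constrained), and the upper bound via conditioning on the out-cluster $R$ of $s$, with $\P(R=S)\le y^{k(n-k)}$ and the entropy-versus-exponent estimate showing the terms $2\le k\le n-2$ are $o(y^{n-1})$. This is essentially the same mechanism the present paper uses in the proof of Theorem \ref{gnp2} for the two-point analogue (the insets $Z_k$, the dominance of the boundary values of $k$, and the bound $\sum_k\binom{n}{k-1}y^{k(n-k)}$ on the middle range), and it matches the spirit of the cited proof in \cite{AJL}. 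One cosmetic remark: you prove the sharper statement that the ratio tends to $1$, whereas the paper's stated convention for $\sim$ only requires convergence of the quotient to a constant; your version is what is actually needed downstream in Theorem \ref{rel}, so this is a feature, not a defect.
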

 
Clearly, $\P(A)=\P(B)$. 
To find the relative correlation between $A$ and $B$ when $n$ approaches  infinity we need an estimate of $\P(A\cap B)$. 

A set $X$ of vertices in $K_n$ is said to be an inset (outset) if all existing edges from $[n]\backslash X$ are directed to (from) $X.$ Let $I^X$ be the event that $X$ is an inset. Let also 
$$Z_k=\bigcup_{\tiny{\begin{array}{c}X:s\in X\\ a\notin X\\ |X|=k\end{array}}}I^X \,\,\,\, \text{   and   } \,\,\,\,Z_k'=\bigcup_{\tiny{\begin{array}{c}X':t\in X'\\ b\notin X'\\ |X'|=k\end{array}}}I^{X'}.$$
Now we have $$\P(s\nrightarrow a)=\P(\bigcup_{k=1}^{n-1}Z_k)$$ and $$\P(A\cap B)=\P(s\nrightarrow a,t\nrightarrow b)=\P(\bigcup_{k=1}^{n-1}Z_k\cap\bigcup_{k=1}^{n-1}Z'_k).$$



\begin{theorem}
\label{gnp2}
For $p\in (0,1]$ we have
$$\lim_{n\rightarrow \infty}\frac{\P(A\cap B)}{ y^{2n-4}}= 4-p$$
\end{theorem}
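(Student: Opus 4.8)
The plan is to prove matching upper and lower bounds of the form $\P(A\cap B)=(4-p)\,y^{2n-4}\,(1+o(1))$, noting that the target constant factors as $4-p=2+2y$ since $y=1-p/2$. The lower bound follows the template of Lemma \ref{lowerB}; the upper bound is where the real work lies.

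For the lower bound I would exhibit four events in $\ggnp$, each forcing $A\cap B$, namely $E_1=\{I_{a,b}=\emptyset\}=I^{[n]\setminus\{a,b\}}$, $E_2=\{O_{s,t}=\emptyset\}=I^{\{s,t\}}$, $E_3=\{I_a=\emptyset\}\cap\{O_t=\emptyset\}$ and $E_4=\{I_b=\emptyset\}\cap\{O_s=\emptyset\}$. Counting the constrained edges, each of which contributes a factor $y$ (the probability that a given edge is not oriented in a forbidden direction), gives $\P(E_1)=\P(E_2)=y^{2(n-2)}=y^{2n-4}$ and $\P(E_3)=\P(E_4)=y^{2n-3}$; the decisive feature is that in $E_1$ and $E_2$ the connecting edge $ab$, respectively $st$, is left entirely unconstrained, while in $E_3,E_4$ the shared edge carries a single consistent constraint. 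Thus $\sum_i\P(E_i)=(2+2y)\,y^{2n-4}=(4-p)\,y^{2n-4}$. Since any two distinct $E_i$ jointly constrain at least $3n-6$ edges, all pairwise (hence all higher) intersections are $O(y^{3n-6})=o(y^{2n-4})$, and Bonferroni yields $\P(A\cap B)\ge\P(\bigcup_i E_i)=(4-p)\,y^{2n-4}\,(1-o(1))$.

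For the matching upper bound I would pass to the minimal insets. Writing $R_s$ and $R_t$ for the sets of vertices reachable from $s$ and from $t$, one has $A\cap B=\{a\notin R_s\}\cap\{b\notin R_t\}$, and because the events $\{R_s=X,\,R_t=X'\}$ partition the sample space, $\P(A\cap B)=\sum\P(R_s=X,R_t=X')$ is an exact sum with no overcounting. Each summand factors, using the independence of the edges leaving $X\cup X'$ from the interior edges, as a boundary weight $y^{|\partial X\cup\partial X'|}$ times an interior reachability probability at most $1$. I would then split according to the sizes $k=|X|$, $k'=|X'|$: once $k$ or $k'$ leaves the extremes $\{1,2,n-2,n-1\}$ the multiplicity $\binom{n-2}{k-1}\binom{n-2}{k'-1}$ is overwhelmed by the falling boundary weight (the same balance of a binomial coefficient against $y^{k(n-k)}$ that underlies Lemma \ref{sl2ab}), so those terms are $o(y^{2n-4})$. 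The remaining extreme terms must then be recombined and shown to total exactly $\sum_i\P(E_i)$: the small-$R$ terms assemble the sink-type events (chiefly the $\{s,t\}$-inset event $E_2$), and the large-$R$ terms, for which $s$, $t$ reach almost everything and the interior factor is $1-o(1)$, assemble the source events $\{I_a=\emptyset\}$, $\{I_b=\emptyset\}$ feeding $E_1,E_3,E_4$.

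The hard part is getting this upper bound exactly right, because the constant is genuinely delicate and the two obvious shortcuts fail. A union bound over all inset pairs grossly overcounts; more insidiously, reducing $A$ to the pure events $\{s\text{ is a sink}\}\cup\{I_a=\emptyset\}$ (and $B$ likewise) is \emph{not} fine enough, since the leftover event $\mathrm{rest}_A\cap B$ is not negligible: the configuration in which $\{s,t\}$ is an inset with the edge $st$ present (so $s\to t$ or $t\to s$ and neither vertex is a sink) already certifies both $A$ and $B$ and carries probability $\Theta(y^{2n-4})$. Replacing the merged events $E_1,E_2$ by $\{s,t\text{ sinks}\}$ and $\{a,b\text{ sources}\}$ — which force $st$, respectively $ab$, to be absent and so weigh only $(1-p)y^{2n-4}$ — produces the wrong value $4-3p$. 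The genuine difficulty, then, is to keep the size-two inset $\{s,t\}$ and its co-size-two analogue $[n]\setminus\{a,b\}$ as dominant events in their own right, accounting for the extra $2p\,y^{2n-4}$ that upgrades $4-3p$ to $4-p$, while simultaneously bounding the polynomially many intermediate configurations and their nontrivial interior reachability probabilities.
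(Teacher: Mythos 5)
Your route is sound and lands on the correct constant, and it overlaps heavily with the paper's: your four events $E_1,\dots,E_4$ are exactly the paper's four contributing cases ($E_2=I^{\{s,t\}}$, $E_1=I^{[n]\setminus\{a,b\}}$, and $E_3,E_4$ are its cases (3) and (4)), and your dismissal of intermediate cluster sizes by weighing $\binom{n-2}{k-1}$ against $y^{k(n-k)}$ is the same estimate the paper uses to kill $s_1,s_2,s_4$. Where you genuinely diverge is the upper bound: the paper writes $A=\bigcup_k Z_k$ as a union of overlapping inset events and isolates the extremes by inclusion--exclusion, whereas you partition over the exact out-clusters $(R_s,R_t)$, which is a disjoint sum with no overcounting; that buys you a cleaner accounting (no need to check the paper's ``remaining six possibilities'') at the price of carrying interior reachability factors, which are $1-o(1)$ at the large end. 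You stop short of the decisive enumeration of extreme pairs, flagging it as the hard part, but it closes more cleanly than you fear and I would urge you to just do it: at the small end only $(\{s\},\{t\})$, $(\{s\},\{s,t\})$, $(\{s,t\},\{t\})$ survive, contributing $(q+2x)y^{2n-4}=y^{2n-4}$ with $q=1-p$, $x=p/2$ (the pair $(\{s,t\},\{s,t\})$ is impossible since the edge $st$ cannot point both ways); at the large end the pair $([n]\setminus\{a\},[n]\setminus\{b\})$ is likewise impossible (it demands $b\in R_s$, hence an edge into $b$, while forbidding every edge into $b$), so only $R_s=R_t=[n]\setminus\{a,b\}$ survives, contributing $y^{2n-4}(1-o(1))$; the mixed pairs $(\{s\},[n]\setminus\{b\})$ and $([n]\setminus\{a\},\{t\})$ give $y^{2n-3}(1-o(1))$ each, and all other extreme pairs constrain at least $3n-O(1)$ edges. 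The total is $(2+2y)y^{2n-4}=(4-p)y^{2n-4}$, matching your lower bound, so the extra $2p\,y^{2n-4}$ you worry about is produced automatically by the $q+2x=1$ split at each end rather than by any separate treatment of the size-two inset.
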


\begin{rema+}
Exact computations indicate that this convergence is very slow for small $p$, see Figure \ref{F:h} in
Section \ref{S:rec}.
\end{rema+}

\begin{proof}
First note that 
$\P(A\cap B)=\P(\cup_{k=1}^{n-1}Z_k\cap\cup_{k=1}^{n-1}Z'_k)=  s_1+s_2+s_3-s_4$, where 
$s_1= \P(\cup_{k=3}^{n-3}Z_k\cap\cup_{k=1}^{n-1}Z'_k)$, $s_2=\P(\cup_{k=1}^{n-1}Z_k\cap\cup_{k=3}^{n-3}Z'_k)$,
$s_3= \P\left(\left(\cup_{k=1}^{2}Z_k \cup_{k=n-2}^{n-1}Z_k\right)\cap \left(
\cup_{k=1}^{2}Z'_k\cup_{k=n-2}^{n-1}Z'_k\right)\right)$ and  
$s_4= \P(\cup_{k=3}^{n-3}Z_k\cap\cup_{k=3}^{n-3}Z'_k)$. 
By symmetry $s_1=s_2$ and clearly $s_4<s_1$. We will write $\P_N(I^X)$ for $\P(I^X)$ with $|X|=N.$ 
We show that $s_1,s_2,s_4$ are negligible compared to $s_3$, and give an estimate  of $s_3$. 
Starting with $s_1,$ first note that $\P_k(I^X)=y^{k(n-k)}$ and if $k<l\leq \frac{n}{2}$ we have $\P_l(I^{Y})<\P_k(I^X).$  
This gives us
\begin{eqnarray*}
s_1 &=& \P(\bigcup_{k=3}^{n-3}Z_k\cap\bigcup_{k=1}^{n-1}Z'_k)\\
&\leq& \P(\bigcup_{k=3}^{n-3}Z_k)\\
&\leq& \sum_{k=3}^{n-3}{n \choose k-1}y^{k(n-k)}\\
&\leq& 2\cdot \sum_{k=3}^{K-1}{n \choose k-1}y^{k(n-k)} + \sum_{k=K}^{n-K}{n \choose k-1}y^{k(n-k)}
\end{eqnarray*}

Now, since $p$ is fixed we may fix $K$ such that $y^K < \frac{y^3}{2}$. 
The sum $ \sum_{k=3}^{K-1}{n \choose k-1}y^{k(n-k)}$ is finite and it is $O(y^{3(n-3)})$ 
which is very small compared to $y^{2n}$, and hence negligable. Further we get
\begin{eqnarray*}
 \sum_{k=K}^{n-K}{n \choose k-1}y^{k(n-k)} &< & 2^n \cdot y^{K(n-K)}\\
&< & 2^n \left( \frac{y^3}{2} \right)^{n-K}=O(y^{3n}).
\end{eqnarray*} 

That is $s_1  \sim  o(y^{2n})$ and analogously so is $s_2$ and $s_4$.

To estimate  $s_3,$ first consider $\P(Z_1\cap Z'_2)$ as an example. In this case no edges will be directed from $s$. For the inset
$X'$ we have two subcases, either it  contains $s$ and $t$ or $t$ and another vertex (different from $s,b).$ In the first case we get a total of $y^{2n-3}$, and for the second case we can choose $X'$ in $n-3$ ways and no edges will be directed from $X'$, this gives us $(n-3)y^{3n-9}(1-p)^2.$ 
In the computations below it will always be the case that if three or more vertices are involved.  Then
the probability will be negligable, i.e. $o(y^{2n})$. 

We get four contributing cases which can be reduced to two by symmetry. 
\begin{itemize}
 \item[(1)] $\P((Z_1\cup Z_2)\cap (Z'_1\cup Z'_2))=y^{2n-4}+o(y^{2n})$. 
\item[(2)] $\P((Z_{n-1}\cup Z_{n-2})\cap (Z'_{n-1}\cup Z'_{n-2}))=y^{2n-4}+o(y^{2n})$. 
\item[(3)] $\P(Z_{1}\cap Z'_{n-1})=y^{2n-3}$. 
\item[(4)] $\P(Z_{n-1}\cap Z'_{1})=y^{2n-3}$. 
\end{itemize}

For $(1)$ we see that if any other vertex than $s$ and $t$ is in the insets  for  $s$ and $t$ we will have conditions on at least $3n-9$ edges and thus a probability of size $o(y^{2n})$. All the interesting cases are when we have no restriction on the possible edge between $s$ and $t$, and no edge must be directed from $s,t$ to any other vertex. Note that our example above is a subset of this case.
Case $(2)$ is symmetric to $(1)$.

For $(3)$ no edge may be directed from $s$ or to $b$, which imposes conditions on $2n-3$ edges. Case $(4)$ is symmetric to $(3)$.
One can easily check that the remaining six possibilities, four cases symmetric to $Z_1\cap Z'_{n-2}$ and two cases symmetric to $Z_2\cap Z'_2$, all have probabilities of size $o(y^{2n})$ and can hence be ignored. 
%

All together we end up with  $2y^{2n-4}+ 2y^{2n-3}+  o(y^{2n})= 2y^{2n-4}(1+(1-\frac{p}{2}))+o(y^{2n})= y^{2n-4}(4-p)+o(y^{2n})$. 
\end{proof}

\begin{theorem}
\label{rel}
For fixed $p\in [0,1]$
$$\lim_{n\rightarrow \infty}\frac{\P(A\cap B)-\P(A)\P(B)}{\P(A\cap B)}=\frac{p(3-p)}{4-p}$$
\end{theorem}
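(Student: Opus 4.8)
The plan is to reduce the statement to the two asymptotic estimates already in hand, namely Lemma~\ref{sl2ab} giving $\P(A)=\P(B)\sim 2y^{n-1}$ and Theorem~\ref{gnp2} giving $\P(A\cap B)\sim(4-p)y^{2n-4}$, together with the defining relation $y=1-p/2$. The only real work is algebraic bookkeeping of the powers of $y$; no new probabilistic input is needed, since the heavy lifting was done in those two prior results.

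First I would rewrite the target quantity as
$$\frac{\P(A\cap B)-\P(A)\P(B)}{\P(A\cap B)}=1-\frac{\P(A)\P(B)}{\P(A\cap B)},$$
so that it suffices to determine the limit of the ratio $\P(A)\P(B)/\P(A\cap B)$. Assuming $p\in(0,1]$, I would form this ratio from the two estimates: since $\P(A)\P(B)\sim 4y^{2n-2}$ and $\P(A\cap B)\sim(4-p)y^{2n-4}$, dividing both numerator and denominator by $y^{2n-4}$ shows they converge to the finite positive constants $4y^2$ and $4-p$ respectively, the latter being positive because $p\le 1<4$. Hence
$$\lim_{\ntoo}\frac{\P(A)\P(B)}{\P(A\cap B)}=\frac{4y^2}{4-p}.$$
Substituting $y=1-p/2$, so that $4y^2=4-4p+p^2$, and subtracting from $1$ gives
$$1-\frac{4-4p+p^2}{4-p}=\frac{(4-p)-(4-4p+p^2)}{4-p}=\frac{p(3-p)}{4-p},$$
as required.

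Finally I would dispose of the boundary case $p=0$ separately, since Theorem~\ref{gnp2} was stated only for $p\in(0,1]$: when $p=0$ no edges exist, so $A$ and $B$ each occur with probability $1$, the covariance vanishes, and the left-hand side equals $0$, which matches the formula $p(3-p)/(4-p)$ evaluated at $p=0$.

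The main (and essentially only) obstacle is verifying that the implied constant in Lemma~\ref{sl2ab} is exactly $1$, i.e.\ that $\P(A)/(2y^{n-1})\to 1$, rather than merely tending to some constant $c$. If it were an arbitrary $c$, the ratio would tend to $4c^2y^2/(4-p)$ and the clean answer would fail unless $c=1$; so the crux is to read off from Lemma~4.2 of \cite{AJL} that the constant is indeed $1$. Once this is confirmed, everything else is routine substitution.
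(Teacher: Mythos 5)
Your proposal is correct and follows exactly the paper's route: the paper's own proof of Theorem~\ref{rel} is simply the observation that it follows from Lemma~\ref{sl2ab} and Theorem~\ref{gnp2}, and you have filled in the routine algebra (including the harmless $p=0$ boundary case and the sensible caveat that the asymptotic constant in Lemma~\ref{sl2ab} must be exactly $1$, which is what Lemma~4.2 of \cite{AJL} provides).
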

\begin{proof}
Follows from Lemma \ref{sl2ab}  and Theorem \ref{gnp2}.
\end{proof}

\begin{corollary}
For a fixed $p\in (0,1]$, the correlation between $A$ and $B$ is always positive for sufficiently large $n$.
\end{corollary}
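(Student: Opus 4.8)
The plan is to read off the sign of the limit established in Theorem \ref{rel} and transfer that sign to the covariance for large but finite $n$. Positive correlation of $A$ and $B$ is, by definition, the statement that the covariance $\P(A\cap B)-\P(A)\P(B)$ is strictly positive; since $\P(A\cap B)>0$, this is equivalent to strict positivity of the \emph{relative} covariance $\frac{\P(A\cap B)-\P(A)\P(B)}{\P(A\cap B)}$, which is exactly the quantity whose limit Theorem \ref{rel} computes.

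The next step is to examine the limiting value $\frac{p(3-p)}{4-p}$ on the range $p\in(0,1]$. Here the numerator $p(3-p)$ is a product of $p>0$ and $3-p\geq 2>0$, while the denominator satisfies $4-p\geq 3>0$; hence the limit is a strictly positive constant for every fixed $p\in(0,1]$. Calling this constant $c:=\frac{p(3-p)}{4-p}>0$, the definition of convergence yields an $N=N(p)$ such that for all $n\geq N$ the relative covariance exceeds $c/2>0$, and is in particular positive.

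Finally I would observe that $\P(A\cap B)>0$ for all $n$ (indeed $\P(A\cap B)\sim(4-p)y^{2n-4}>0$ by Theorem \ref{gnp2}), so positivity of the ratio forces positivity of its numerator $\P(A\cap B)-\P(A)\P(B)$. This is precisely the asserted positive correlation for all $n\geq N(p)$.

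There is essentially no hard step here: the genuine content has been front-loaded into Theorems \ref{gnp2} and \ref{rel}, and the corollary reduces to the elementary facts that $\frac{p(3-p)}{4-p}$ is positive on $(0,1]$ and that a sequence converging to a positive limit is eventually positive. The only point demanding care is that the phrase ``sufficiently large $n$'' cannot be removed: the threshold $N(p)$ genuinely depends on $p$, and, as the Remark following Theorem \ref{gnp2} cautions, the convergence is slow for small $p$, so no bound uniform in $p$ is available by this argument.
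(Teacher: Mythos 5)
Your argument is correct and is exactly the route the paper intends: the corollary is stated without proof because it follows immediately from Theorem \ref{rel} in just the way you describe (the limit $p(3-p)/(4-p)$ is strictly positive on $(0,1]$, so the relative covariance is eventually positive, and $\P(A\cap B)>0$ transfers the sign to the covariance). Your closing remark about the threshold $N(p)$ depending on $p$ is a fair and accurate caveat.
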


We believe that something stronger is true and we offer the following conjecture, which is supported by our calculations in 
Section \ref{S:rec}.

\begin{conjecture}\label{C:positive}
For any $n\ge 4$ and $p\in (0,1]$, the events $\{s\to a\}$ and $\{t\to b\}$ are always positively correlated.
\end{conjecture}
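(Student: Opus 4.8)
The plan is to prove the non-asymptotic inequality $\P(A\cap B)\ge \P(A)\P(B)$, equivalently $\operatorname{Cov}(A,B)\ge 0$, directly for every $n\ge 4$ and $p\in(0,1]$ rather than only in the limit. The first point to stress is what cannot be used: since the path $P_4$ on $s,b,a,t$ already produces negative correlation, there is no lattice order on the three-state edges (absent, $u\!\to\! v$, $v\!\to\! u$) in which both $A$ and $B$ are monotone, so Harris/FKG is unavailable and the positivity must be an ensemble phenomenon specific to \gnp. Consequently I would build the argument around the exact recursion of Section \ref{S:rec} and induct on $n$.

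For the recursion I would condition on the one-step out-neighbourhoods $O_s=N^+(s)$ and $O_t=N^+(t)$, which carry the possible first edges of any path from $s$, respectively $t$. Since a reachability path need never revisit its start, $A$ occurs exactly when $a\notin O_s$ and $a$ is unreachable from the \emph{set} $O_s$ in the oriented graph on $[n]\setminus\{s\}$, and symmetrically for $B$; deleting $s$ and $t$ reduces the joint event to a non-reachability statement about $a$ from $O_s$ and $b$ from $O_t$ inside \gnp on fewer vertices. Summing over the neighbourhoods with their binomial weights in $x=p/2$, $y=1-x$ yields a closed recursion for $\P_n(A\cap B)$ in terms of analogous joint non-reachability probabilities on smaller graphs, alongside the already-available recursion of \cite{AL} for $\P_n(A)=\P_n(B)$.

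I would then set $\Delta_n:=\P_n(A\cap B)-\P_n(A)^2$, verify $\Delta_n\ge 0$ for the base cases $n=4,5,\dots$ by the same direct computation used for $K_n$ in Section \ref{corrKn}, and attempt to show the recursion carries $\Delta_{n'}\ge 0$ for $n'<n$ up to $\Delta_n\ge 0$. The structural subtlety is that the reduced event involves \emph{source sets} $O_s,O_t$ rather than single vertices, so the natural object of the induction is not $\Delta_n$ itself but a comparison statement for set-to-target non-reachability. The hard part, and the reason this stays a conjecture, is that substituting the recursions into $\Delta_n$ produces cross-terms measuring the correlation of reachability-to-$a$ with reachability-to-$b$ from two independently random source sets, and these are not individually sign-definite. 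Closing the induction appears to require a strengthened invariant asserting that the joint source-to-target non-reachability generating function dominates the product of the marginal ones, uniformly over the admissible source sets, and is itself preserved by the recursion; pinning down and proving the correct such invariant is exactly the obstacle I expect to be decisive.

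As a complementary check, for each fixed $n$ the quantity $\Delta_n$ is a polynomial in $x=p/2$, so positivity on $[0,\tfrac12]$ can be examined by writing it in the Bernstein basis and testing non-negativity of the coefficients; this is how I would gather further evidence and locate the extremal $p$, matching the numerical support from Section \ref{S:rec}. A proof uniform in $n$, however, seems to come back to the recursion and its invariant in any case.
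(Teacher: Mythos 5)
The statement you were asked to prove is, in the paper itself, precisely a conjecture: the authors do not prove it. What they establish is only the asymptotic version (Theorem \ref{gnp2} and Theorem \ref{rel} with its corollary: for each fixed $p\in(0,1]$ the correlation is positive for all sufficiently large $n$), together with an exact recursion in Section \ref{S:rec} whose evaluation for $n\le 34$ supports positivity for all $p$. So there is no proof in the paper to compare yours against, and --- as you yourself concede --- your text is not a proof either: the induction you outline is never closed, because the ``strengthened invariant'' that would make the inductive step go through is neither formulated nor verified. That is the decisive gap, and it is essentially the same gap that leaves the statement open in the paper.

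Two concrete remarks on the route you sketch. First, conditioning on $O_s=N^+(s)$ and $O_t=N^+(t)$ does not decouple as cleanly as you claim: after deleting $s$, the event $A$ becomes non-reachability of $a$ from the set $O_s$ in the graph on $[n]\setminus\{s\}$, which still contains $t$ (a path from $O_s$ to $a$ may pass through $t$), while $B$ reduces to a statement on $[n]\setminus\{t\}$, which still contains $s$; moreover $t\in O_s$ and $s\in O_t$ are possible. The joint event therefore does not reduce to a statement on a single smaller \ggnp{} instance, and any honest recursion must track this interaction --- which is why the paper's recursion (Theorem \ref{T:h}) works with the full out-cluster $\hpil{C}_t$ and in-cluster $\vpil{C}_a$ and their intersection pattern rather than with one-step neighbourhoods. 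Second, the Bernstein-basis positivity check is evidence only for each fixed $n$, exactly like the paper's Maple computations for $n\le 34$; it cannot deliver the uniform-in-$n$ claim. Your diagnosis of why the problem is hard (no FKG-type monotonicity, sign-indefinite cross-terms) is sound and consistent with the paper, but no proof is given here, and none exists in the paper.
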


\section{Random orientations of $G(n,m)$}
\label{gnm}
In this section we study the same problem on the random graph $\gnm$ where each simple graph with $m$ edges and $n$ vertices is equally likely. We will also here let every edge have an independent direction and call the combined probability space $\ggnm$. 
 Again, let $y=1-\frac{p}{2},$ let further $q(l)=q(l;n,m)$ be the probability that $l$ fixed edges in $K_n$ does not exist in $\ggnm$ with given directions. 
In $\ggnp$ this corresponds to $y^l.$ If nothing else is written the graph considered in this section is always 
$\ggnm$. 

In \cite{AJL} the following lemma was prooven.

\begin{lemma} [Janson, Lemma 3.2 in \cite{AJL}] 
\label{gnme}
 Suppose that $0\leq m=m(n) \leq {n \choose 2}.$ Then with $p=p(n)=m(n)/ {n \choose 2},$ as $n\rightarrow \infty$,
$$q(l;n,m) \sim y^l \exp \left( -\left( \frac{l}{n} \right) ^2 \frac{p(1-p)}{(2-p)^2}\right),$$
and for any $l, n, m$ we have $q(l;n,m) \leq q'(l;n,p).$
\end{lemma}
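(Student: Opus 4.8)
The plan is to first derive an exact expression for $q(l;n,m)$ and then read off both assertions from it. Write $N=\nn$, let $(x)_j=x(x-1)\cdots(x-j+1)$ denote the falling factorial, and fix $l$ arcs on $l$ distinct slots of $K_n$. Conditioning on the number $j$ of these $l$ slots that are occupied in $\ggnm$: the occupied slots form a uniform $m$-subset of the $N$ slots, so the count of occupied special slots is hypergeometric, and given a slot is occupied its orientation disagrees with the prescribed one with probability $\tfrac12$, independently. Hence
$$q(l;n,m)=\sum_{j=0}^{l}\binom{l}{j}\,2^{-j}\,\frac{(m)_j\,(N-m)_{l-j}}{(N)_l}.$$
Writing $H_j=\binom{l}{j}(m)_j(N-m)_{l-j}/(N)_l$ for the hypergeometric probability of exactly $j$ occupied special slots and $B_j=\binom{l}{j}p^{j}(1-p)^{l-j}$ for the matching binomial probability (both with mean $lp$), we get $q(l;n,m)=\sum_j 2^{-j}H_j=\mathbb E_H[2^{-J}]$, whereas the $\ggnp$ analogue is $q'(l;n,p)=y^l=\mathbb E_B[2^{-J}]$, using $\sum_j\binom lj(p/2)^j(1-p)^{l-j}=(1-p/2)^l$.

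For the inequality $q(l;n,m)\le q'(l;n,p)$ I would invoke the convex order between sampling without and with replacement. The without-replacement count $J\sim H$ and the with-replacement count $J\sim B$ share the mean $lp$, and by the classical comparison theorem (Hoeffding) the former is dominated by the latter in the convex order; hence $\mathbb E_H[\phi(J)]\le\mathbb E_B[\phi(J)]$ for every convex $\phi$. Taking $\phi(j)=2^{-j}$ yields exactly $q(l;n,m)\le y^l=q'(l;n,p)$ for all $l,n,m$. I would emphasise that the comparison is genuinely global: the slotwise inequality $H_j\le B_j$ can fail (already for $l=2$ at $j=1$, where $H_1/B_1=N/(N-1)>1$), so one really needs convexity together with the matched means rather than a term-by-term bound.

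For the asymptotic equivalence I would expand the falling factorials. Using $(x)_k=x^k\exp\!\big(-\binom{k}{2}/x+O(k^3/x^2)\big)$ for $k\ll x$, and noting that for fixed $p\in(0,1)$ and $l=O(n)$ the arguments $j,l-j$ are $o(m)$ and $o(N-m)$ so the errors are uniformly negligible, one obtains
$$\frac{(m)_j(N-m)_{l-j}}{(N)_l}=p^{j}(1-p)^{l-j}\,e^{g(j)}\bigl(1+o(1)\bigr),\qquad g(j)=-\frac{\binom{j}{2}}{m}-\frac{\binom{l-j}{2}}{N-m}+\frac{\binom{l}{2}}{N}.$$
Substituting and factoring out $y^l$ turns the sum into $q(l;n,m)=y^l\,\mathbb E_{J\sim\mathrm{Bin}(l,\rho)}\big[e^{g(J)}\big](1+o(1))$, with $\rho=\tfrac{p/2}{1-p/2}=\tfrac{p}{2-p}$. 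Since $J$ concentrates around its mean $\bar{j}=l\rho$, evaluating $g$ there with $\binom{k}{2}\sim k^2/2$, $m=pN$, $N-m=(1-p)N$, $N\sim n^2/2$ collapses the bracket via the identity $-\rho^2/p-(1-\rho)^2/(1-p)+1=-p(1-p)/(2-p)^2$, giving $g(\bar{j})\sim-\frac{l^2}{2N}\cdot\frac{p(1-p)}{(2-p)^2}\sim-\big(\tfrac{l}{n}\big)^2\frac{p(1-p)}{(2-p)^2}$, the claimed exponent.

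The main obstacle is the passage from $\mathbb E[e^{g(J)}]$ to $e^{g(\bar{j})}$. This requires a genuine concentration/Laplace estimate: Taylor-expanding $g$ about $\bar{j}$, the linear term has zero mean, the quadratic correction $\tfrac12 g''(\bar{j})\,\mathrm{Var}(J)$ is $O(1/n^2)\cdot O(n)=o(1)$, and the contribution of the linear fluctuation is controlled by $g'(\bar{j})=O(1/n)$ against $\mathrm{Var}(J)=O(l)=O(n)$, again $o(1)$; all of this must be made uniform in $j$ over the effective range $|j-\bar{j}|=O(\sqrt{l}\log n)$, together with uniform control of the falling-factorial errors there. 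One must also separate the trivial regime of bounded $l$ (where the exponent tends to $1$ and $q\sim y^l$ directly) from $l\asymp n$, and exclude or treat separately the degenerate limits $p\to0,1$ — consistent with the slow convergence for small $p$ noted in the remark after Theorem \ref{gnp2}.
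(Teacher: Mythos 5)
The paper does not actually prove this lemma: it is quoted verbatim from \cite{AJL} (Lemma 3.2 there, due to Janson), so there is no internal proof to compare against. Your reconstruction is correct and is essentially the argument one would expect from the cited source: the exact identity $q(l;n,m)=\mathbb{E}_H[2^{-J}]$ with $J$ hypergeometric, Hoeffding's convex-order comparison between sampling without and with replacement for the inequality $q(l;n,m)\le q'(l;n,p)=y^l$ (and your observation that a term-by-term bound $H_j\le B_j$ fails, so the matched means plus convexity are genuinely needed, is a good catch), and a Laplace-type concentration estimate around the mean of the tilted binomial $\mathrm{Bin}(l,\rho)$, $\rho=p/(2-p)$, for the asymptotics. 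The algebra checks out: $1-\rho^2/p-(1-\rho)^2/(1-p)=-p(1-p)/(2-p)^2$ and $l^2/(2\binom{n}{2})\sim(l/n)^2$, giving the claimed exponent. The only caveats are ones you already flag yourself: the asymptotic statement implicitly assumes $l=O(n)$ (which is how it is applied here, with $l\approx 2n$) and requires uniform control of the falling-factorial errors and of the fluctuation terms over the effective range of $J$; note also that the lemma as stated allows $p=p(n)$ to vary with $n$, whereas your argument is phrased for fixed $p\in(0,1)$, which is all this paper uses.
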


This lemma together with the proof of Theorem \ref{gnp2} gives us an analogue result 
of Theorem \ref{gnp2} for $\ggnm$.

\begin{theorem}
\label{p(ab)Gnm}
In the case of $G(n,m)$ for fixed $0<p<1$ we have 
\begin{eqnarray*}
\P(A\cap B) &\sim& 2y^{2n-4}\exp \left( - 4\frac{p(1-p)}{(2-p)^2}  \right) \\
&+&2y^{2n-3}\exp \left( - 4\frac{p(1-p)}{(2-p)^2}  \right) 
\end{eqnarray*}
\end{theorem}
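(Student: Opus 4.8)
The plan is to re-run the proof of Theorem~\ref{gnp2} at the level of events, changing only the final step, where each independent-edge probability $y^{l}$ is replaced by its $\ggnm$ analogue $q(l;n,m)$ and estimated through Lemma~\ref{gnme}. I would keep the decomposition $\P(A\cap B)=s_1+s_2+s_3-s_4$ verbatim, since it is purely combinatorial and uses no property of the edge model. The sole model-dependent change is that a fixed set $X$ with $|X|=k$ is an inset with probability $q(k(n-k);n,m)$ rather than $y^{k(n-k)}$, because being an inset forbids exactly $k(n-k)$ oriented edges.

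For the negligible terms $s_1,s_2,s_4$ I would invoke the uniform bound $q(l;n,m)\le q'(l;n,p)=y^{l}$ of Lemma~\ref{gnme}. Inserting it termwise into the union bounds used in the proof of Theorem~\ref{gnp2} dominates each of $s_1,s_2,s_4$ in $\ggnm$ by the corresponding sum in $\ggnp$, which was already shown there to be $o(y^{2n})$. The same bound disposes of the six minor sub-cases and of every configuration involving a third distinguished vertex, since any such configuration pins down at least $3n-9$ edges and is therefore $O(y^{3n-9})=o(y^{2n})$.

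It then remains to evaluate the four leading contributions to $s_3$, each a single inset configuration with a clean edge count. Cases~(1) and~(2) each constrain exactly $2n-4$ edges, contributing $2\,q(2n-4;n,m)$ in $\ggnm$, while cases~(3) and~(4) each constrain $2n-3$ edges, contributing $2\,q(2n-3;n,m)$. Applying the asymptotic half of Lemma~\ref{gnme}, $q(l;n,m)\sim y^{l}\exp(-(l/n)^2 p(1-p)/(2-p)^2)$, together with $(2n-4)/n\to 2$ and $(2n-3)/n\to 2$ so that $(l/n)^2\to 4$ in both cases, gives
$$\P(A\cap B)\sim 2y^{2n-4}\exp\!\left(-4\frac{p(1-p)}{(2-p)^2}\right)+2y^{2n-3}\exp\!\left(-4\frac{p(1-p)}{(2-p)^2}\right),$$
which is the assertion.

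The main obstacle is that edges in $\ggnm$ are not independent, so probabilities cannot be factored as in $\ggnp$; the clean resolution is that Lemma~\ref{gnme} packages the entire dependence into the single function $q(l;n,m)$, supplying simultaneously the uniform upper bound that rescues every error estimate from the $\ggnp$ proof and the sharp asymptotic needed for the main term. A secondary point to check is that the two leading groups, carrying $y^{2n-4}$ and $y^{2n-3}$, pick up the \emph{same} exponential factor $\exp(-4p(1-p)/(2-p)^2)$; this holds precisely because both edge-counts equal $2n+O(1)$, so the additive constant washes out in the limit $(l/n)^2\to 4$.
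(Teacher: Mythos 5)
Your proposal is correct and follows exactly the route the paper intends: the paper gives no separate proof of this theorem, stating only that it follows from Lemma~\ref{gnme} combined with the proof of Theorem~\ref{gnp2}, which is precisely your argument (uniform bound $q(l;n,m)\le y^l$ for the error terms, sharp asymptotic with $(l/n)^2\to 4$ for the two leading configurations on $2n-4$ and $2n-3$ edges). In fact you supply the details the paper leaves implicit, including the observation that both leading terms acquire the same exponential factor because $l=2n+O(1)$ in each case.
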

 
Also we need the following lemma.

\begin{lemma}[Lemma 4.3 in \cite{AJL}]
\label{p(a)Gnm}
For fixed $0<p<1$ 

$$\P(A) \sim 2y^{n-1} \exp \left( - \frac{p(1-p)}{(2-p)^2}  \right).$$
\end{lemma}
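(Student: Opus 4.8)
The plan is to follow the structure used for Theorem~\ref{gnp2}, now applied to the single event $A=\{s\nrightarrow a\}$, transporting the $\ggnp$ estimate $\P(A)\sim 2y^{n-1}$ of Lemma~\ref{sl2ab} to $\ggnm$ via Janson's Lemma~\ref{gnme}. Write $c=\frac{p(1-p)}{(2-p)^2}$. Starting from $\P(A)=\P(\bigcup_{k=1}^{n-1}Z_k)$, I would single out the two insets that carry the leading order: $X=\{s\}$, giving $Z_1=I^{\{s\}}$ (no directed edge leaves $s$), and $X=[n]\setminus\{a\}$, giving $Z_{n-1}=I^{[n]\setminus\{a\}}$ (no directed edge enters $a$). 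Each constrains exactly $k(n-k)=n-1$ directed edges, so in $\ggnm$ each has probability $q(n-1;n,m)$.

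The first substantive step is to evaluate these two terms. By Lemma~\ref{gnme} with $l=n-1$,
\[
q(n-1;n,m)\sim y^{n-1}\exp\!\left(-\Bigl(\tfrac{n-1}{n}\Bigr)^{2}c\right)\sim y^{n-1}\exp(-c),
\]
since $\bigl(\tfrac{n-1}{n}\bigr)^{2}\to 1$. For the lower bound I would use $\P(A)\ge\P(Z_1\cup Z_{n-1})=2q(n-1;n,m)-\P(Z_1\cap Z_{n-1})$. The intersection imposes a prescribed absence on each of the $2n-3$ distinct edges incident to $s$ or $a$ (the edge $sa$ being common to both conditions), so $\P(Z_1\cap Z_{n-1})=q(2n-3;n,m)\le y^{2n-3}=o(y^{n-1})$, whence $\P(A)\ge 2y^{n-1}\exp(-c)+o(y^{n-1})$.

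For the matching upper bound I would bound the union by a sum over all admissible insets, noting that $Z_k$ ranges over the $\binom{n-2}{k-1}$ sets $X$ with $s\in X$, $a\notin X$, $|X|=k$:
\[
\P(A)\le 2q(n-1;n,m)+\sum_{k=2}^{n-2}\binom{n-2}{k-1}q(k(n-k);n,m).
\]
By the second part of Lemma~\ref{gnme}, $q(l;n,m)\le y^{l}$, so the error sum is dominated termwise by $\sum_{k=2}^{n-2}\binom{n-2}{k-1}y^{k(n-k)}$, exactly the type of sum shown to be negligible in the proof of Theorem~\ref{gnp2}: the extreme terms $k=2$ and $k=n-2$ contribute $O(n\,y^{2n-4})$, while for interior $k$ the factor $y^{k(n-k)}$ overwhelms $2^{n}$, so the whole sum is $o(y^{n-1})$. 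Combining the bounds yields $\P(A)\sim 2y^{n-1}\exp(-c)$.

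The main obstacle is not any single estimate but verifying that \emph{every} inset apart from the two extreme ones contributes $o(y^{n-1})$. One must in particular control the boundary cases $k=2,\,n-2$, whose combinatorial factor grows only linearly in $n$, and confirm that the dependence among edges introduced by fixing the edge count in $\ggnm$ never inflates these probabilities beyond their $\ggnp$ counterparts---which is precisely what the inequality $q(l;n,m)\le y^{l}$ provides. Once this domination is in hand, the asymptotic reduces to the clean $l=n-1$ application of Janson's lemma above.
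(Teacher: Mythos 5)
Your proof is correct: the paper states this lemma without proof (it is imported as Lemma 4.3 of \cite{AJL}), and your argument --- isolating the two extreme insets $Z_1$ and $Z_{n-1}$, evaluating each via Janson's Lemma~\ref{gnme} with $l=n-1$ so that $\bigl(\tfrac{n-1}{n}\bigr)^2\to 1$, and disposing of all other insets with the domination $q(l;n,m)\le y^l$ together with the $K$-splitting sum estimate from the proof of Theorem~\ref{gnp2} --- is exactly the transfer-from-$\ggnp$ technique the paper itself invokes for the analogous Theorem~\ref{p(ab)Gnm}. No gaps: in particular you correctly count $\P(Z_1\cap Z_{n-1})=q(2n-3;n,m)$ by noting the single shared directed condition on the pair $(s,a)$, and you rightly flag that the boundary insets $k=2,\,n-2$ need separate treatment (their contribution $O(n\,y^{2n-4})$ is $o(y^{n-1})$ since $n\,y^{n-3}\to 0$), while the interior terms require the fixed-$K$ split rather than the naive $2^n y^{3(n-3)}$ bound, which would fail for small $p$.
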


We are now ready to state and prove the main theorem of this section.

\begin{theorem}
For  fixed $0<p<1$  and sufficiently large $n$, the events $A$ and $B$ are positively correlated and the relative covariance is
$$\sim 1-\frac{2\left(1-\frac{p}{2}\right )^2}{2-\frac{p}{2}} \cdot \exp \left( 2\frac{p(1-p)}{(2-p)^2}  \right).$$
\end{theorem}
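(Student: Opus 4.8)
The plan is to derive the relative covariance directly from the two asymptotic estimates already available, namely Theorem~\ref{p(ab)Gnm} for $\P(A\cap B)$ and Lemma~\ref{p(a)Gnm} for $\P(A)$, and then to verify that the resulting limit is strictly positive. Since the model is symmetric under interchanging the pairs $(s,a)$ and $(t,b)$ we have $\P(A)=\P(B)$, so the relative covariance equals
$$\frac{\P(A\cap B)-\P(A)\P(B)}{\P(A\cap B)}=1-\frac{\P(A)^2}{\P(A\cap B)},$$
and it suffices to compute the single ratio $\P(A)^2/\P(A\cap B)$.

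Writing $c=\frac{p(1-p)}{(2-p)^2}$ for brevity, Lemma~\ref{p(a)Gnm} gives $\P(A)^2\sim 4y^{2n-2}e^{-2c}$, while factoring the two terms in Theorem~\ref{p(ab)Gnm} gives $\P(A\cap B)\sim 2y^{2n-4}(1+y)e^{-4c}$. Dividing, the surplus powers of $y$ and the exponentials combine to
$$\frac{\P(A)^2}{\P(A\cap B)}\sim\frac{2y^2}{1+y}\,e^{2c}.$$
Substituting $y=1-\tfrac p2$, so that $1+y=2-\tfrac p2$ and $y^2=(1-\tfrac p2)^2$, turns this into $\frac{2(1-p/2)^2}{2-p/2}\exp\!\big(2\tfrac{p(1-p)}{(2-p)^2}\big)$, and subtracting from $1$ yields exactly the claimed expression. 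This part is a routine substitution once the two input estimates are in hand.

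The remaining, and main, task is to show that this limit is strictly positive for every $p\in(0,1)$, which then forces the covariance itself to be positive for all sufficiently large $n$. Equivalently I must show $f(y):=\frac{2y^2}{1+y}e^{2c}<1$, where in the variable $y=1-\tfrac p2\in(\tfrac12,1)$ one computes $2c=-2+3/y-1/y^2$. I would study $\ln f(y)$, noting that $\ln f(1)=0$ (the boundary case $p=0$), and differentiate. Clearing the positive factor $y^3(1+y)$ reduces the sign of $(\ln f)'$ to that of $g(y)=y^3-y^2-y+2$; since $g'(y)=(3y+1)(y-1)<0$ on $(\tfrac12,1)$, the function $g$ is decreasing with minimum $g(1)=1>0$, so $(\ln f)'>0$ throughout. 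Hence $\ln f$ is increasing and $\ln f(y)<\ln f(1)=0$ for $y<1$, that is $f<1$, giving a strictly positive limit and therefore positive correlation for large $n$.

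The only genuine obstacle is this last monotonicity estimate: the exponential factor $e^{2c}$ exceeds $1$ on the whole range while the rational factor $\frac{2y^2}{1+y}$ stays below $1$, so the inequality is a real competition between the two and cannot be read off termwise. Reducing it to the cubic $g$ and controlling $g$ through its elementary derivative is what makes the argument close; everything else is bookkeeping with the asymptotics from the cited lemmas.
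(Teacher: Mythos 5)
Your proposal is correct and follows essentially the same route as the paper: rewrite the relative covariance as $1-\P(A)\P(B)/\P(A\cap B)$, plug in Lemma~\ref{p(a)Gnm} and Theorem~\ref{p(ab)Gnm}, and then show the resulting ratio $f$ is below $1$ by a monotonicity argument anchored at the boundary value $f=1$ at $p=0$. The only (harmless) difference is that you carry out the calculus in the variable $y=1-p/2$, reducing the sign of $(\ln f)'$ to the cubic $y^3-y^2-y+2$, whereas the paper differentiates directly in $p$; both computations yield the required strict inequality on $0<p<1$.
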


\begin{proof}
We rewrite the relative covariance as 
$$\frac{\P(A\cap B)-\P(A)\P(B)}{\P(A\cap B)}=1-\frac{\P(A)\P(B)}{\P(A\cap B)}.$$
As $n$ approaches  $\infty,$ Theorem \ref{p(ab)Gnm} and Lemma \ref{p(a)Gnm} gives
\begin{eqnarray*}
\frac{\P(A)\P(B)}{\P(A\cap B)}&\sim&  \frac{4y^{2n-2}  \exp \left(-2 \frac{p(1-p)}{(2-p)^2}  \right)  }{  2y^{2n-4}\exp \left(-4 \frac{p(1-p)}{(2-p)^2}  \right) \left( 2-\frac{p}{2}\right)}  \\
&=& \frac{2 \left(1-\frac{p}{2}\right )^2 }{\left( 2-\frac{p}{2}\right)} \exp \left( 2\frac{p(1-p)}{(2-p)^2}  \right)
\end{eqnarray*}
Let us denote this expression by $f$. It remains to prove that $f$ is less than one when $0<p<1$. This can be proven by using the derivative of $f.$ We have that 
$$f'(p)= e^{\frac{2 (1-p) p}{(2-p)^2}} \frac{p^3+4p^2-8}{(4-p)^2 (2-p)}.$$ 
The theorem follows since the derivative is negative in this interval and $f(0)=1.$


\end{proof}
We conjecture the covariance to be positive at all times.
\begin{conjecture}
The events $A$ and $B$ are positivelly correlated in $\ggnm$ for all $p$ and all $n.$
\end{conjecture}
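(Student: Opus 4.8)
To prove the conjecture that $A$ and $B$ are positively correlated in \ggnm\ for all $n$ and all $m$, the plan is to reduce it to a single family of finite inequalities and then attack those inductively, keeping in mind that the genuine difficulty is the \emph{absence} of the soft monotone structure available in the annealed $p$-model. First I would fix $n$ and reduce to showing
$$\P(A\cap B)\ \ge\ \P(A)\,\P(B)\qquad\text{in }\ggnm\text{ for every }0\le m\le \nn,$$
using $\P(A)=\P(B)$, which holds by the symmetry of the uniform $m$-edge model under relabelling. The two extreme values of $m$ are already settled: $m=0$ gives $\P(A)=\P(B)=\P(A\cap B)=1$ and covariance $0$, while $m=\nn$ is the random tournament, for which Section~\ref{corrKn} proves strict positivity for $n\ge4$. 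The content of the conjecture is therefore the behaviour at intermediate $m$.

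The natural handle is the conditional (quenched) decomposition. Conditioning on the underlying unoriented graph $G$, drawn uniformly among $m$-edge graphs, and writing $\alpha(G)=\P_G(A)$, $\beta(G)=\P_G(B)$ for the orientation-averaged probabilities given $G$, one has
$$\P(A\cap B)-\P(A)\P(B)=\mathbb{E}_G[\operatorname{Cov}_G(A,B)]+\operatorname{Cov}_G(\alpha,\beta).$$
Both $\alpha$ and $\beta$ are \emph{decreasing} in the edge set, since adding an edge only creates paths and thus only lowers the non-reachability probabilities. In \ggnp\ the edges are independent, so Harris/FKG gives $\operatorname{Cov}(\alpha,\beta)\ge0$ for free; this is exactly why the $p$-model is softer. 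In \ggnm\ the edge count is fixed, FKG is unavailable, and the elementary identity $\operatorname{Cov}_{\ggnp}(A,B)=\mathbb{E}_M[\operatorname{Cov}_{\ggnm}(A,B)]+\operatorname{Var}_M(\P_M(A))$, obtained by conditioning $G(n,p)$ on its number of edges $M$, shows that the \ggnm-covariances are, on average over $m$, \emph{smaller} than the \ggnp-covariance. Hence proving \ggnm-positivity would yield Conjecture~\ref{C:positive} but is strictly more demanding, and the required positivity must be extracted from the averaged quenched term $\mathbb{E}_G[\operatorname{Cov}_G(A,B)]$.

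The route I would take to that term is the exact recursion of Section~\ref{S:rec}. Because every \gnp-probability is a polynomial in $p$ in the Bernstein basis $\binom{\nn}{m}p^{m}(1-p)^{\nn-m}$, its coefficients are exactly the corresponding \ggnm-probabilities; reading them off converts the recursion into an exact recursion for $\P_m(A\cap B)$ and $\P_m(A)$. I would then attempt induction on $m$ (adding one uniformly chosen edge in a random orientation) and try to show the step preserves $\P_m(A\cap B)-\P_m(A)^2\ge0$. To make the averaged quenched contribution manifestly non-negative I would group the graphs $G$ into orbits under the symmetries $s\leftrightarrow a$ and $t\leftrightarrow b$, and within each orbit pair the configurations in which no edge leaves $s$ with those in which no edge enters $a$ — the exact mechanism that produces the $K_n$ lower bound in Lemma~\ref{lowerB}.

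The hard part, and the reason this remains only a conjecture, is that there is no graph-by-graph positivity to fall back on: already the path $sb,ba,at$ has $\operatorname{Cov}_G(A,B)=-\tfrac1{16}<0$, because $\{s\to a\}$ and $\{t\to b\}$ then force opposite orientations of the single edge $ba$ and are disjoint. Thus $\mathbb{E}_G[\operatorname{Cov}_G(A,B)]$ is a sum of terms of both signs, the inclusion–exclusion over insets and outsets is not sign-definite at intermediate $m$, and the cross term $\operatorname{Cov}_G(\alpha,\beta)$ — no longer protected by FKG under the fixed edge count — actively works against us. Closing the argument requires a quantitative statement, uniform in $n$ and $m$, that the positive source/sink contributions dominate every negative path-like contribution; producing such an estimate, or equivalently establishing a discrete log-concavity of $m\mapsto\P_m(A\cap B)$ relative to $\P_m(A)^2$, is the principal obstacle I expect to face.
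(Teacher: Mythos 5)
This statement is a \emph{conjecture}: the paper contains no proof of it, so there is no argument of the authors' to compare yours against. The only support in the paper is indirect --- the asymptotic result that $A$ and $B$ are positively correlated in $\ggnm$ for fixed $0<p<1$ and sufficiently large $n$, the exact computations of Section \ref{S:rec}, and the closing remark that positivity in $\ggnm$ would imply Conjecture \ref{C:positive} for $\ggnp$. Your proposal is correctly calibrated to this situation: it is a research program, not a proof, and you say so explicitly. On the mathematics you do commit to, you are accurate. The reduction to $\P(A\cap B)\ge\P(A)^2$ for every $0\le m\le\nn$, the settled endpoints ($m=0$ trivially, $m=\nn$ being the random tournament handled in Section \ref{corrKn} for $n\ge 4$), the total-covariance decomposition over the underlying graph, the monotonicity of $\alpha(G),\beta(G)$ in the edge set, and the Harris/FKG argument in the annealed model are all correct. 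So is the conditioning identity
\begin{equation*}
\operatorname{Cov}_{\ggnp}(A,B)=\mathbb{E}_M\bigl[\operatorname{Cov}_{\vec G(n,M)}(A,B)\bigr]+\operatorname{Var}_M\bigl(\P_M(A)\bigr),
\end{equation*}
and your reading of it --- that positivity in $\ggnm$ for all $m$ implies the $\ggnp$ conjecture, while being the strictly harder problem --- is in fact a cleaner and self-contained version of the paper's loosely worded remark following the conjecture. Your computation that the path $sb,ba,at$ has $\operatorname{Cov}_G(A,B)=-\tfrac{1}{16}$ checks out ($\{s\to a\}$ and $\{t\to b\}$ force opposite orientations of $ba$, so the intersection is empty while each event has probability $\tfrac14$), and the Bernstein-basis observation that the $\ggnm$ probabilities are the coefficients of the $\ggnp$ polynomial in the basis $\binom{\nn}{m}p^m(1-p)^{\nn-m}$ is valid and would indeed convert the recursion of Section \ref{S:rec} into exact $\ggnm$ data.

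The genuine gap is the one you name yourself, and it should be stated plainly: nothing in the proposal makes the induction on $m$ go through. Adding a uniformly random $(m{+}1)$-st oriented edge yields no monotone coupling of the covariances; the proposed pairing of ``no edge out of $s$'' with ``no edge into $a$'' configurations within symmetry orbits is only sketched, and the mechanism behind Lemma \ref{lowerB} relies on inclusion--exclusion over events whose probabilities are comparable only in the tournament case $m=\nn$ --- at intermediate $m$ the same events acquire $m$-dependent weights and the signs do not organize themselves. Likewise, $\operatorname{Cov}(\alpha,\beta)$ under a fixed edge count is genuinely beyond FKG (the uniform $m$-edge measure is negatively associated, which if anything cuts the wrong way for two overlapping decreasing functionals), and the proposed ``discrete log-concavity of $m\mapsto\P_m(A\cap B)$ relative to $\P_m(A)^2$'' is an unproven and unargued hypothesis, not a lemma. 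So the proposal does not prove the statement; the fair verdict is that it is a sound, well-informed reduction framework whose intermediate claims are all correct, and which stops at exactly the open quantitative estimate where the authors themselves stopped.
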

Note that the covariance of $\ggnp$ is always less than the covariance of $\ggnm$ (see \cite{AJL}). So the conjecture would also imply the correlation to be positive in $\ggnp.$


\section{Exact recursion in $\ggnp$.}
\label{S:rec}
In this section we will give an exact recursion to compute 
\[ \P_{\ggnp}(a\notto s, t\notto b).\] 
Together with the recursion given for $\fpn:=\P_{\ggnp}(a\notto s)$ in 
\cite{AJL} we will be able compute the covariance for $n$ as a rational function in $p$. 
Our computations for $n\le 34$,
using Maple, supports our Conjecture \ref{C:positive} that the covariance is always positive, see 
Figure \ref{F:Corr}.

\begin{figure}[htbp]
\begin{center}
\epsfig{file=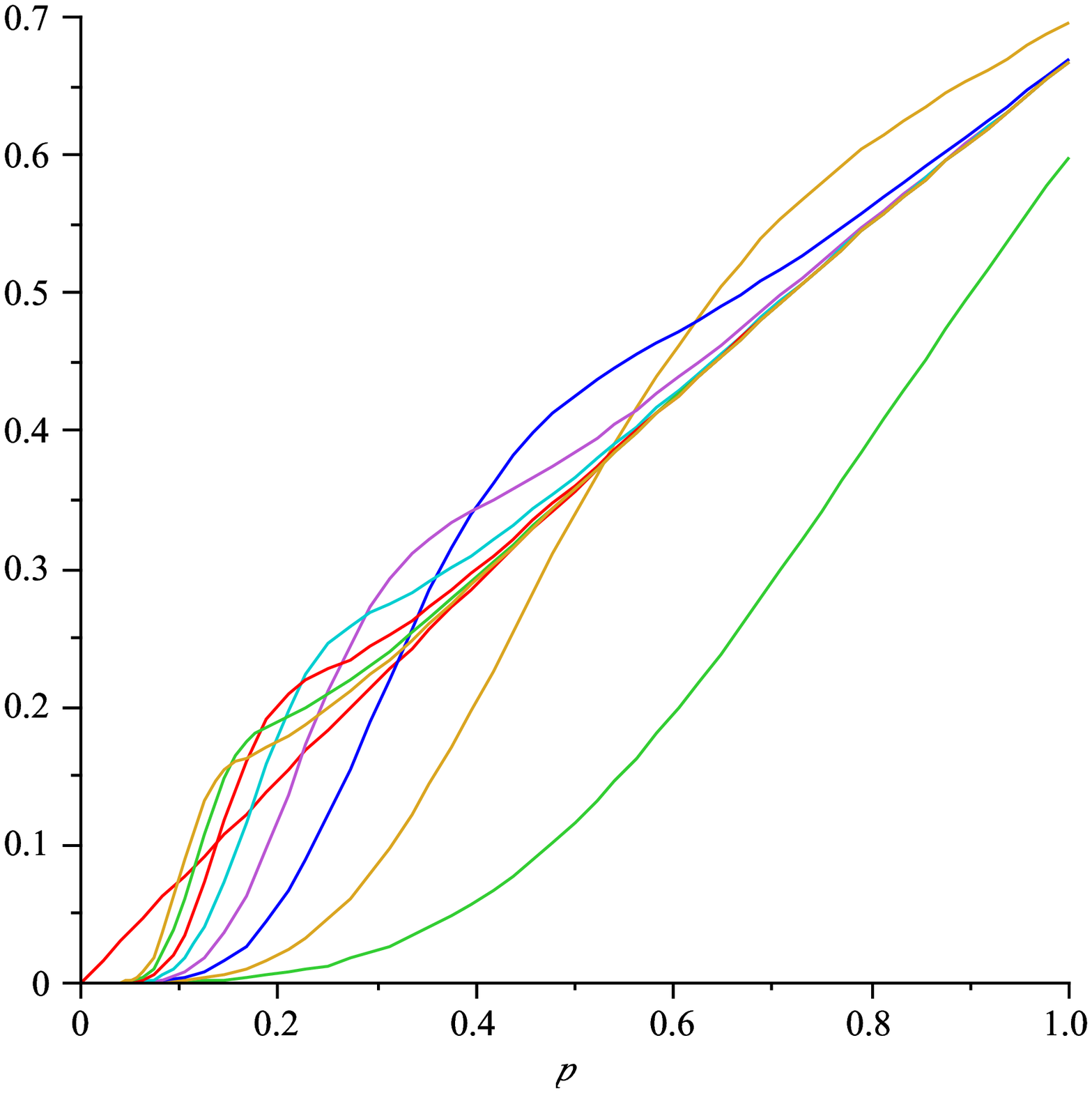, height=8cm}
\end{center}
\caption{The relative covariance $\frac{\P(a \notto s, t \notto b)-\P(a \notto s)\P(t \notto b)}{\P(a \notto s, t \notto b)}$ in $\ggnp$
for going from right to left $n=6\text{ (green)} ,8,10,12,14,16,18,20,22\text{ (blue)} $, 
and the asymptote $p(3-p)/(4-p)$. 
All curves are positive for $0<p\le 1$.} 
\label{F:Corr}
\end{figure}

\begin{figure}[htbp]
\begin{center}
\epsfig{file=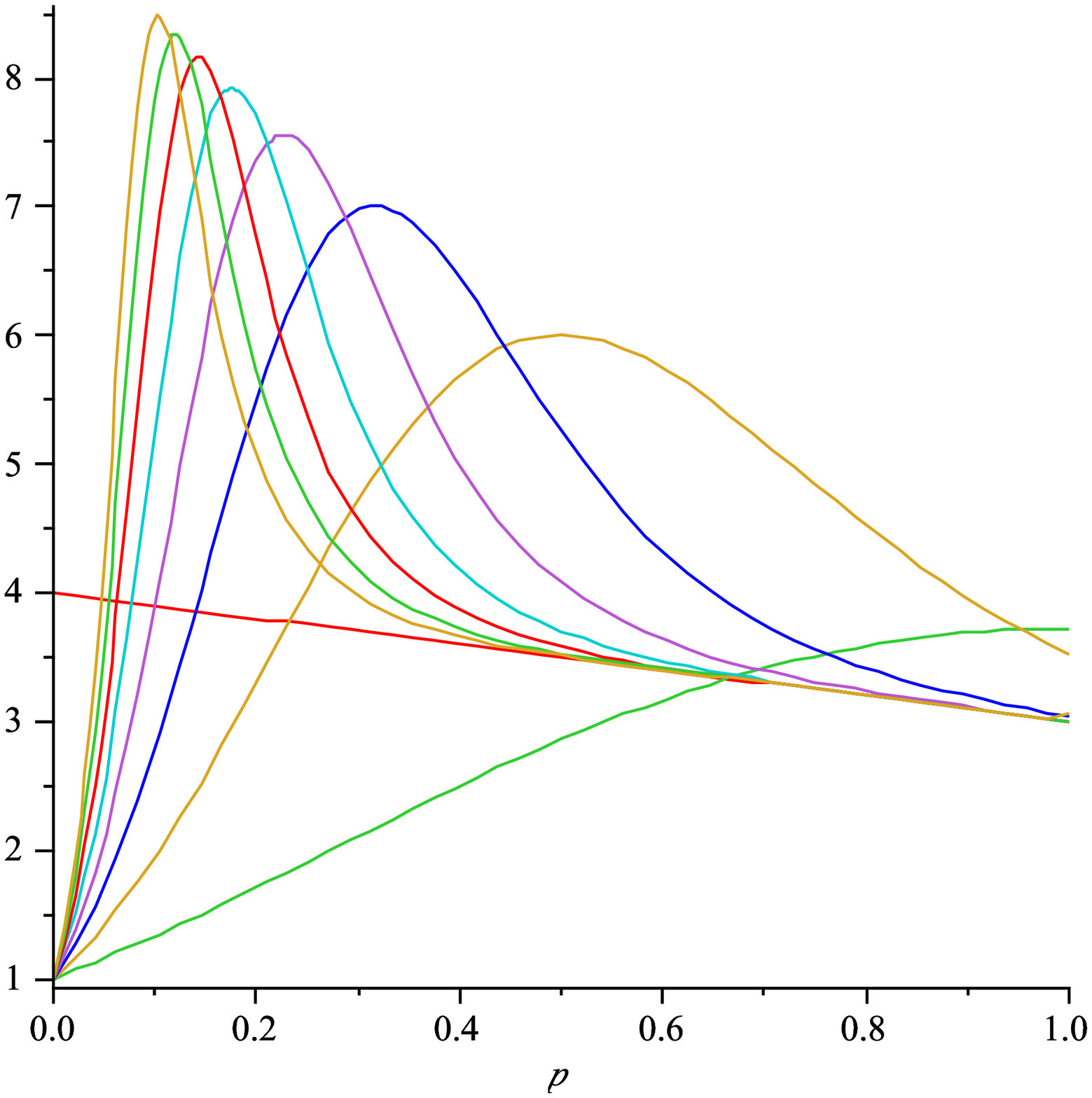, height=6cm}
\end{center}
\caption{Plots of $\frac{\P(a \notto s, t \notto b)}{y^{2n-4}}$ in $\ggnp$
for going from right to left $n=6\text{ (green)} ,8,10,12,14,16,18,20,22\text{ (blue)} $, 
and the asymptote $4-p$. 
See Theorem \ref{gnp2}.} 
\label{F:h}
\end{figure}

For a vertex $v\in V(G)$, let $\hpil{C}_v\subseteq V(G)$ be the (random) set
of all vertices $u$ for which there is a directed path from $v$ to $u$. We
say that $\hpil{C}_v$ 
is the {\bf out-cluster} from $v$. Let analogously 
the {\bf in-cluster}, $\vpil{C}_v\subseteq V(G)$ be the (random) set of all vertices
$u$ for which there is a directed path from $u$ to $v$. Note that we will
use the convention that $v\in \vpil{C}_v\cap\hpil{C}_v$. 
Let as before $y:=1-p/2$ 
be the probability that an edge does not exist with
a certain direction, and let $q:=1-p$ be
the probability that there is no edge
at all.

For $n\ge 1$, 
$s\in {\S}\subseteq [n]$ and $|{\S}|=k$ define:
\[ d_p(n,k):=\P_{\ggnp}(\hpil{C}_s={\S}),\]
where in particular $d_p(1,1)=1$.
A recursion to compute $d_p(n,k)$ as a polynomial in $p$ was given in \cite{AJL}.

\begin{lemma}[Lemma 5.1 in \cite{AJL}]
 We have the following recursions
$$d_p(n,k)=d_p(k,k)y^{k(n-k)}, \text{ for } n>k\geq 1,$$
and
$$d_p(k,k)= 1- \sum_{i=1}^{k-1} {k-1 \choose i-1}d_p(i,i)y^{i(k-i)}.$$
\end{lemma}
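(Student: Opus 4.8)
The plan is to prove both recursions by a single cluster-decomposition argument, using only the independence of the edge variables in $\ggnp$ and the symmetry that makes $d_p(n,k)$ well defined: the quantity depends on $\S$ only through $k=|\S|$, since any relabelling of the vertices fixing $s$ preserves the distribution. The key structural observation is that the event $\{\hpil{C}_s=\S\}$ splits cleanly according to whether an edge lies inside $\S$ or crosses the cut between $\S$ and $[n]\setminus\S$.

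For the first recursion I would first identify $\{\hpil{C}_s=\S\}$ with the intersection of two events living on disjoint edge-sets. Call an edge between $u\in\S$ and $w\notin\S$ \emph{outgoing} if it is present and directed from $u$ to $w$. If no crossing edge is outgoing, then every directed path started at $s$ must stay inside $\S$, so the out-cluster is contained in $\S$ and, moreover, reachability from $s$ in the whole digraph coincides with reachability in the induced sub-digraph on $\S$ (a path can neither leave nor re-enter across the cut). Hence $\{\hpil{C}_s=\S\}$ holds exactly when (i) none of the $k(n-k)$ crossing edges is outgoing, and (ii) in the induced digraph on $\S$ every vertex is reachable from $s$. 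Event (i) has probability $y^{k(n-k)}$, since each crossing edge is independently non-outgoing with probability $1-p/2=y$; event (ii) depends only on the edges inside $\S$ and has probability $d_p(k,k)$ by definition. Since (i) and (ii) involve disjoint collections of independent edges (and the edges inside $[n]\setminus\S$ are irrelevant), they are independent, giving $d_p(n,k)=d_p(k,k)\,y^{k(n-k)}$.

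For the second recursion I would work inside $K_k$ and sum over all possible out-clusters. The random set $\hpil{C}_s$ always contains $s$, so partitioning by its size $i$ and summing over all admissible sets gives $1=\sum_{i=1}^{k}\binom{k-1}{i-1}\,d_p(k,i)$, because there are $\binom{k-1}{i-1}$ subsets of $[k]$ of size $i$ containing $s$. Substituting $d_p(k,i)=d_p(i,i)\,y^{i(k-i)}$ from the first recursion yields $1=\sum_{i=1}^{k}\binom{k-1}{i-1}d_p(i,i)\,y^{i(k-i)}$; isolating the $i=k$ term, which equals $d_p(k,k)$, and transposing the remaining terms gives the stated formula.

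The only genuinely delicate step is the decomposition underlying the first recursion: one must verify that "no outgoing crossing edge" really pins the out-cluster to $\S$ and that reachability in the full digraph restricted to $\S$ equals reachability in the induced digraph on $\S$. This is where paths that wander outside $\S$ and return must be excluded; the point is that a path can leave $\S$ only across an outgoing crossing edge, so in the absence of such edges every path from $s$ remains within $\S$ and uses only interior edges. Once this is pinned down, the independence of the two events and the per-edge probability $y$ are immediate, and the second recursion is a routine normalisation.
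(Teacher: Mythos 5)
Your proof is correct and is essentially the standard argument (the paper itself states this lemma without proof, citing [AJL]): the first recursion follows from the independence of the cut condition (no present edge oriented out of $\S$, probability $y$ per crossing pair) and the event that the induced digraph on $\S$ has full out-cluster from $s$, and the second is the normalisation $\sum_{i=1}^{k}\binom{k-1}{i-1}d_p(i,i)y^{i(k-i)}=1$. Your care in checking that paths cannot leave and re-enter $\S$ is exactly the point that makes the decomposition an equality of events, so nothing is missing.
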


 Note that, by symmetry, also $\P_{\ggnp}(\vpil{C}_s={\S})=d_p(n,k)$.
\medskip




It turns out that the following quantity is possible to compute recursively and enables us to compute $\hpn$. 
For $n\ge 2$,
$ t\in {\T}\subseteq [n]$, $ a\in {\A}\subseteq [n]$ with $|{\T}|=\tau,
|{\A}|=\al$ and  
$|[n]\setminus ({\A}\cup {\T})|=r$ define:
\[ N_p(n,\tau,\al,r):=\P_{\ggnp}(\hpil{C}_t={\T}, \vpil{C}_a={\A}),\]
where in particular $N_p(2,2,2,0)=x$ and $N_p(2,1,1,0)=y$. 

We will use the variable $j$ for the size of the intersection $|\A\cap\T|$. If there is any intersection between $\A$ and $\T$ then $a,t\in \A\cap \T$, so in particular $j=\al+\tau-(n-r)$ can never be 1.

\begin{theorem} \label{T:h} 
We have the following recursions for $N_p$,
where $\tau+\al>n-r\ge \tau,\al$ and $\tau,\al \ge 1$
\begin{romenumerate}
\item\label{lg1} 
$\displaystyle 
N_p(n,\tau,\al,r)=N_p(n-r,\tau,\al,0)q^{r(r+\tau+\al-n)}y^{r(2n-2r-\tau-\al)}, \quad \text{for $r>0$}$,
\item\label{lg2} 
$\displaystyle N_p(n,\tau,\al,r)=N_p(n,\al,\tau,r)$,
\item\label{lg3} 
$\displaystyle 
N_p(n,\tau,\al,0)=\sum_{\zeta=1}^{n-\tau}\binom{n-\tau-1}{\zeta-1}N_p(n-\zeta,\tau,\al-\zeta,0)d_p(\zeta,\zeta)q^{(\zeta-1)(\al+\tau-n)}\cdot$\\
$\displaystyle 
y^{(\zeta-1)(2n-\tau-\al-\zeta)}\big(y^{\tau}-y^{2n-\al-\tau-\zeta}q^{\al+\tau-n}\big) ,\quad \text{for $n>\tau, n\ge \al\ge 2, j\ge 2$}$,
\item\label{lg4} 
$\displaystyle 
N_p(n,\tau,\al,0)=\sum_{\zeta=1}^{\al-1}\binom{\al-2}{\zeta-1}N_p(n-\zeta,\tau,\al-\zeta,0)d_p(\zeta,\zeta)\cdot$\\
$\displaystyle 
y^{(\zeta-1)(\tau+\al-\zeta)}y^{\tau}\left(1-y^{\al-\zeta}\right) ,\quad \text{for 
$\al\ge 2, j=0$ i.e. $n=\tau+\al$}$,
\item\label{lg5} 
  $\displaystyle 
N_p(n,n,n,0)=$\\
 $\displaystyle {}
1-\sum_{j=2}^{n-1}\binom{n-2}{j-2}\sum_{\tau=j}^{n}\binom{n-j}{\tau-j}\sum_{\al=j}^{n-\tau+j}\binom{n-\tau}{\al-j}N_p(n,\tau,\al,n-\al-\tau+j)$\\
$\displaystyle 
-\sum_{\tau=1}^{n}\binom{n-2}{\tau-1}\sum_{\al=1}^{n-\tau}\binom{n-\tau-1}{\al-1}N_p(n,\tau,\al,n-\al-\tau),$

\end{romenumerate}
\end{theorem}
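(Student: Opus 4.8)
The plan is to prove all five identities by the same mechanism: partition the edge-orientation sample space of $\ggnp$ according to the joint cluster structure $(\hpil{C}_t,\vpil{C}_a)$, and factor the probability of each block into an \emph{internal} part (a smaller value of $N_p$, or a factor $d_p(\zeta,\zeta)$) times a \emph{boundary} part that records the forced or forbidden edges as a product of powers of $y$ and $q$. Throughout write $R:=[n]\setminus(\A\cup\T)$ and $j:=|\A\cap\T|=\al+\tau-(n-r)$, and recall from the discussion before the theorem that $j\ne 1$ and that $a,t\in\A\cap\T$ as soon as $j\ge 2$.

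For \ref{lg1} I would strip off the $r$ vertices of $R$ one at a time. A vertex $v\in R$ lies outside both clusters, which forces independent local conditions: to each of the $j$ vertices of $\A\cap\T$ there can be no edge in either direction (an edge either way would pull $v$ into $\hpil{C}_t$ or into $\vpil{C}_a$), contributing $q^{j}$; and to each of the $\tau+\al-2j$ vertices of the symmetric difference only the single dangerous orientation is forbidden, contributing one factor $y$ apiece. Edges internal to $R$ are free, and the edges inside $\A\cup\T$ reproduce exactly the event counted by $N_p(n-r,\tau,\al,0)$; multiplying the $r$ identical factors and simplifying $rj=r(r+\tau+\al-n)$ and $r(\tau+\al-2j)=r(2n-2r-\tau-\al)$ yields the stated exponents. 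Identity \ref{lg2} is immediate: reversing every edge orientation is a measure-preserving involution of $\ggnp$ that exchanges $\hpil{C}_t$ with $\vpil{C}_t$ and $\vpil{C}_a$ with $\hpil{C}_a$, hence after relabelling it interchanges the roles of $\tau$ and $\al$.

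The core is \ref{lg3} (case $r=0$, $j\ge 2$) together with its boundary analogue \ref{lg4} (case $j=0$), where I would peel a whole sub-cluster off the in-cluster. Fix the canonical, say least-labelled, vertex $v_0\in\A\setminus\T$ and let $\S:=\vpil{C}_{v_0}$ be its in-cluster; since any vertex reaching $v_0$ already reaches $a$ and cannot be reached from $t$, one checks that $\S\subseteq\A\setminus\T$ automatically. Summing over $|\S|=\zeta$ and over the remaining $\zeta-1$ vertices of $\S$ produces the binomial $\binom{n-\tau-1}{\zeta-1}$ (and $\binom{\al-2}{\zeta-1}$ in \ref{lg4}, where $a$ must be excluded as well). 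The internal law of $\S$ contributes $d_p(\zeta,\zeta)$, deleting $\S$ leaves the smaller instance $N_p(n-\zeta,\tau,\al-\zeta,0)$ (whose intersection size is again $j$), and the powers $q^{(\zeta-1)(\al+\tau-n)}y^{(\zeta-1)(2n-\tau-\al-\zeta)}$ record, just as in \ref{lg1}, the forbidden inward and intersection edges at the $\zeta-1$ non-root vertices. The delicate factor is the bracket: $v_0$ must receive no edge from $\T$ (the term $y^{\tau}$) yet must still reach $a$ through the surviving in-cluster $\A\setminus\S$, so the bracket is an inclusion--exclusion that subtracts precisely the disconnecting configurations, giving $y^{\tau}-y^{2n-\al-\tau-\zeta}q^{\al+\tau-n}$ in \ref{lg3}, and the cleaner $y^{\tau}(1-y^{\al-\zeta})$ (at least one forward edge into the $\al-\zeta$ remaining in-cluster vertices) when $j=0$ in \ref{lg4}.

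Finally \ref{lg5} is a normalization: since $\hpil{C}_t$ and $\vpil{C}_a$ are always defined, $\sum N_p(n,\tau,\al,r)=1$ over all admissible pairs $(\T,\A)$. I would split this total according to $j\ge 2$ versus $j=0$, encode the independent choices of $(\A\cap\T)\setminus\{a,t\}$, of $\T\setminus\A$, and of $\A\setminus\T$ by the three nested binomials in each sum, and move the unique full term $(\T,\A)=([n],[n])$, namely $N_p(n,n,n,0)$, to the left-hand side; the stated index ranges are exactly those making the chosen sets non-empty and correctly disjoint or overlapping. I expect the main obstacle to be \ref{lg3}: one must verify that ``peel the in-cluster of $v_0$'' is a genuine partition of the sample space (nothing double-counted, nothing missed), which is what pins down both the canonical choice of root and the exact inclusion--exclusion bracket; the remaining exponent arithmetic should then be routine. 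As a consistency check I would confirm that the scheme reproduces the marginal recursion for $d_p$ and the recursion for $\fpn=\P_{\ggnp}(a\notto s)$, and that it matches the exact small-$n$ values.
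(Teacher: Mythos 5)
Your treatment of \textup{(i)}, \textup{(ii)} and \textup{(v)} matches the paper's proof. The genuine gap is in \textup{(iii)} and \textup{(iv)}: you peel off the wrong random set. You take $\S=\vpil{C}_{v_0}$, the in-cluster \emph{in the whole graph} of a canonical vertex $v_0\in\A\setminus\T$. The paper instead fixes a vertex $z\in\A\setminus\T$ and lets $\Z$ be the set of vertices all of whose directed paths to $a$ pass through $z$ (equivalently, the vertices that drop out of $\vpil{C}_a$ when $z$ is deleted). These are genuinely different random subsets of $\A\setminus\T$, and only the paper's choice produces the stated factors.

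Two things go wrong with your choice. First, the exponents: for $w\in\S\setminus\{v_0\}$ and $u\in\A\cap\T$, the edge $w\to u$ is perfectly compatible with $\hpil{C}_t=\T$, $\vpil{C}_a=\A$ and $\vpil{C}_{v_0}=\S$ (it does not put $w$ into $\T$, and $w$ already lies in $\A$), so these $j$ pairs each contribute $y$, not $q$, and your factor $q^{(\zeta-1)(\al+\tau-n)}$ is unjustified; with the paper's $\Z$ that edge is forbidden because $u\in\A\setminus\Z$ reaches $a$ while avoiding $z$, which would contradict $w\in\Z$, and that is exactly why both orientations die and the pair contributes $q$. Second, and more seriously, the bracket: under your definition the requirement that $v_0$ (hence all of $\S$) reach $a$ is \emph{not} a condition on $v_0$'s edges alone --- the escaping edge into $\A\setminus\S$ may leave from any vertex of $\S$ that $v_0$ can reach inside $\S$ --- so the probability of a block does not factor as $d_p(\zeta,\zeta)$ times boundary powers times $N_p(n-\zeta,\tau,\al-\zeta,0)$, and the expression $y^{\tau}-y^{2n-\al-\tau-\zeta}q^{\al+\tau-n}$ cannot be derived. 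The paper's definition localizes the connection to $a$ entirely at $z$ (every other vertex of $\Z$ is forbidden from sending an edge into $\A\setminus\Z$), after which the bracket is simply $\P(\text{no edge from }\T\text{ into }z)-\P(\text{no edge from }\T\text{ into }z\text{ and no edge from }z\text{ into }\A\setminus\Z)$; the $j=0$ case gives the cleaner bracket of \textup{(iv)}. Your final consistency checks would in fact expose this: the recursion built from $\vpil{C}_{v_0}$ does not reproduce the stated polynomial identities.
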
 

\begin{proof} For the first equation we have $r>0$, thus $[n]\setminus ({\A}\cup {\T})$ is non-empty and
no vertex in that set must not have any edge directed
 to ${\A}$ or from ${\T}$. Hence there must be no edge at all to
 ${\A}\cap {\T}$, which gives probability $q^{|[n]\setminus {\A}\cup
 {\T}|\cdot |{\A}\cap {\T}|}=q^{r(r+\tau+\al-n)}$. There must not be any edge 
directed to $({\A}\setminus {\T})$  and there must not be any edge directed from $({\T}\setminus {\A})$. This gives the probability of $y^{|[n]\setminus {\A}\cup {\T}|\cdot |({\A}\setminus {\T})\cup ({\T}\setminus {\A})|}=
 y^{r(2n-2r-\tau-\al)}$.

The second equation is obtained from the symmetry of reversing all directions and switching the roles of 
$a$ and $t$.
 
 For equation \ref{lg3} and \ref{lg4}, 
 we pick a vertex $z\in {\A}\setminus {\T}$, such a vertex exists by the assumption $n>\tau$ and $r=0$.
Let $G$ be any directed
graph on $n$ vertices with $\hpil{C}_t={\T}$ and $\vpil{C}_a={\A}$. If we
remove vertex $z$ and all its edges from $G$ the resulting graph will still
have $\hpil{C}_t={\T}$ since $z\notin {\T}$, whereas $\vpil{C}_s=\A \backslash {\Z}$, for
some $\Z\subseteq \A\backslash\{a\}$ such that ${\Z}\cap {\T}=\emptyset$. This follows from the fact that
the vertices in $\Z$ are those that have a path to $a$ only via $z$ and no vertex in $\T$ has a directed path leading to $z$ by assumption.
Let $\zeta=|{\Z}|$ and sum over all possible ${\Z}$.  
 The probability is $N_p(n-\zeta,\tau,\al-\zeta,0)$ that the subgraph on $[n]\setminus
 \Z$ is as needed. The subgraph on $\Z$
 must have $\vpil{C}_z={\Z}$ which has probability
 $d_p(j,j)$. Let us first consider equation \ref{lg3} when $j=\tau+\al-n\ge 2$.

 There must not be any edge between ${\T}\cap {\A}$ and $\Z\setminus \{z\}$, since the vertices of the latter do not belong to ${\T}$ and have all directed paths via $z$. 
 This gives a factor $q^{(\zeta-1)(\al+\tau-n)}$. 
No vertex of  ${\Z\setminus \{z\}}$ can have an edge to ${\A}\setminus ({\T}\cup \Z)$ or
 from ${\T}\setminus {\A}$ , which gives a factor $y^{(\zeta-1)(2n-\tau-\al-\zeta)}$. 
 Finally, we must consider the edges of $z$. The main condition is that there must not be any edge 
 from $\T$ to $z$. However, there must be at least one edge edge directed from $z$ to $\A\setminus \Z$. This 
 give the last factor.
 The case of equation \ref{lg4} when $j=0$ is easier and obtained similarly.

Equation \ref{lg5} follows from the fact that for fixed $n$
\[\sum_{\T,\A :a\in {\A},t\in {\T}\subseteq [n]} \P_{\ggnp}(\hpil{C}_t={\T},\vpil{C}_a={\A})=1.\] 
Here $j=|{\A}\cap {\T}|$ and recall that $j=1$ is not an option.

\end{proof}

\begin{theorem}
We have the following expression for $ \P_{\ggnp}(a\notto s, t\notto b).$ 
\begin{multline*}
   \P_{\ggnp}(a\notto s, t\notto b) =\sum_{j=2}^{n-2}\binom{n-4}{j-2}\cdot \\
\qquad  \Biggl( \;
\sum_{\tau=j}^{n-2}\binom{n-2-j}{\tau-j}\sum_{\al=j}^{n-\tau+j-1}\binom{n-\tau-1}{\al-j}N_p(n,\tau,\al,n-\al-\tau+j)\\
+\sum_{\tau=j+1}^{n-1}\binom{n-2-j}{\tau-j-1}\sum_{\al=j}^{n-\tau+j}\binom{n-\tau}{\al-j}N_p(n,\tau,\al,n-\al-\tau+j)
\Biggr)\\
\qquad
+\sum_{\tau=1}^{n-3}\binom{n-4}{\tau-1}\sum_{\al=1}^{n-\tau-1}\binom{n-\tau-2}{\al-1}N_p(n,\tau,\al,n-\al-\tau)\\
+\sum_{\tau=2}^{n-2}\binom{n-4}{\tau-2}\sum_{\al=j}^{n-\tau}\binom{n-\tau-1}{\al-1}N_p(n,\tau,\al,n-\al-\tau),
\end{multline*}
\end{theorem}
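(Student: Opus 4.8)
The plan is to compute $\P_{\ggnp}(a\notto s,t\notto b)$ by conditioning on the exact out-cluster of $t$ together with the exact in-cluster of $a$, which is precisely the joint information recorded by $N_p$. First I would read the two defining events off these clusters: $t\notto b$ holds if and only if $b\notin\hpil{C}_t$, and the avoidance condition at $a$ holds if and only if $s\notin\vpil{C}_a$ (the in-cluster records exactly the vertices from which $a$ is reachable). Since the events $\{\hpil{C}_t=\T,\ \vpil{C}_a=\A\}$ partition the probability space as $\T$ and $\A$ range over all admissible pairs, I would write
\[
 \P_{\ggnp}(a\notto s,t\notto b)=\sum_{\substack{t\in\T,\ b\notin\T\\ a\in\A,\ s\notin\A}}N_p(n,|\T|,|\A|,r),
\]
where $r=|[n]\setminus(\A\cup\T)|$ and the sum runs over all pairs $(\T,\A)$ meeting these four membership constraints.

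Because $N_p(n,\tau,\al,r)$ depends on $(\T,\A)$ only through $\tau=|\T|$, $\al=|\A|$ and $r$ --- equivalently through $\tau,\al$ and $j=|\A\cap\T|=\tau+\al-(n-r)$ --- the task reduces to a counting problem: for each admissible triple, count the constrained pairs $(\T,\A)$ with those parameters. Here I would use the observation already in the excerpt that $j\ne1$, since a single common vertex forces $a,t\in\A\cap\T$; this separates the computation into the disjoint case $j=0$ and the case $j\ge2$, which produce respectively the final two single blocks and the $j$-indexed double block of the statement.

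The counting is organized by the locations of the four marked vertices among the regions $\A\cap\T$, $\T\setminus\A$, $\A\setminus\T$ and the complement. When $j\ge2$ both $a$ and $t$ must lie in $\A\cap\T$, so the remaining $j-2$ common vertices are chosen from the $n-4$ generic vertices, giving the outer factor $\binom{n-4}{j-2}$. I would then split according as $s$ lies in the complement or in $\T\setminus\A$ (it can never lie in $\A$), which yields the two inner blocks, while the placement of $b$ (in $\A\setminus\T$ or in the complement, never in $\T$) is absorbed into the binomial $\binom{n-\tau-1}{\al-j}$ resp.\ $\binom{n-\tau}{\al-j}$ that counts the choice of $\A\setminus\T$ from the remaining eligible pool. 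The case $j=0$ is handled the same way, with $\A,\T$ disjoint and the split on whether $s\in\T$ producing the last two blocks; in each block the ranges of $\tau$ and $\al$ are forced by the sizes of the available pools.

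The main obstacle I anticipate is exactly this bookkeeping: tracking which of $a,b,s,t$ is eligible for which region, and hence over which generic pool each binomial coefficient counts, without double counting or dropping a configuration. The delicate points are (i) reconciling the four constraints $t\in\T$, $b\notin\T$, $a\in\A$, $s\notin\A$ with the forced memberships $a,t\in\A\cap\T$ in the case $j\ge2$, and (ii) pinning down the boundary values of $\tau$ and $\al$ so that the stated summation limits neither over- nor under-count degenerate pairs. Once the four cases are tabulated and their counts matched term by term against the four blocks, summing recovers the claimed identity.
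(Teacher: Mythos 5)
Your proposal is correct and follows essentially the same route as the paper: the paper's (very terse) proof likewise sums $N_p$ over all pairs $(\T,\A)$ with $b\notin\T$, $s\notin\A$ and splits into four cases according to whether $j=0$ and whether $s\in\T$, which is exactly your case analysis. Your write-up simply supplies the bookkeeping details (the pools of eligible generic vertices behind each binomial coefficient) that the paper leaves implicit.
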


\begin{proof}
The equation for $\P_{\ggnp}(a\notto s, t\notto b)$  is obtained by summing over all possible pairs ${\A},{\T}$
such that $s\notin {\A}, b\notin {\T}$. Again $j=|{\A}\cap {\T}|$ and the
formula is split into four cases depending on if $s\in {\T}$ or not and if $j=0$ or not. 
\end{proof}

Note that in $\ggnp$ the functions $\P(s \notto a)$ and $\P(s \notto a, t
\notto b)$  
are polynomials in $p$ and hence continuous.


\section{The Quenched model}
\label{quenched}
For the quenched version the correlation between $A$ and $B$ is computed for each graph in $G(n,p) \, (G(n,m))$ in the probability space of edge orientations and then the expected value is taken over all graphs. 

We computed the covariance between $A$ and $B$ for $G(n,p)$ as a function over $p,$ in both the annealed and the quenched model for $n\leq 6.$ The two cases looks quite similar, see Figure \ref{bild6}.  Note that for $n\leq 6$ the covariances are positive
also for small $p$ and we conjecture it to be positive for all $n.$ This differs from the behavior for the similar problem studied in
Section 9 in \cite{AJL}. It would also be intresting to find an analouge to Theorem \ref{rel} for the quenched model.

\begin{figure}[h] 
\centering
\includegraphics[width=12cm]{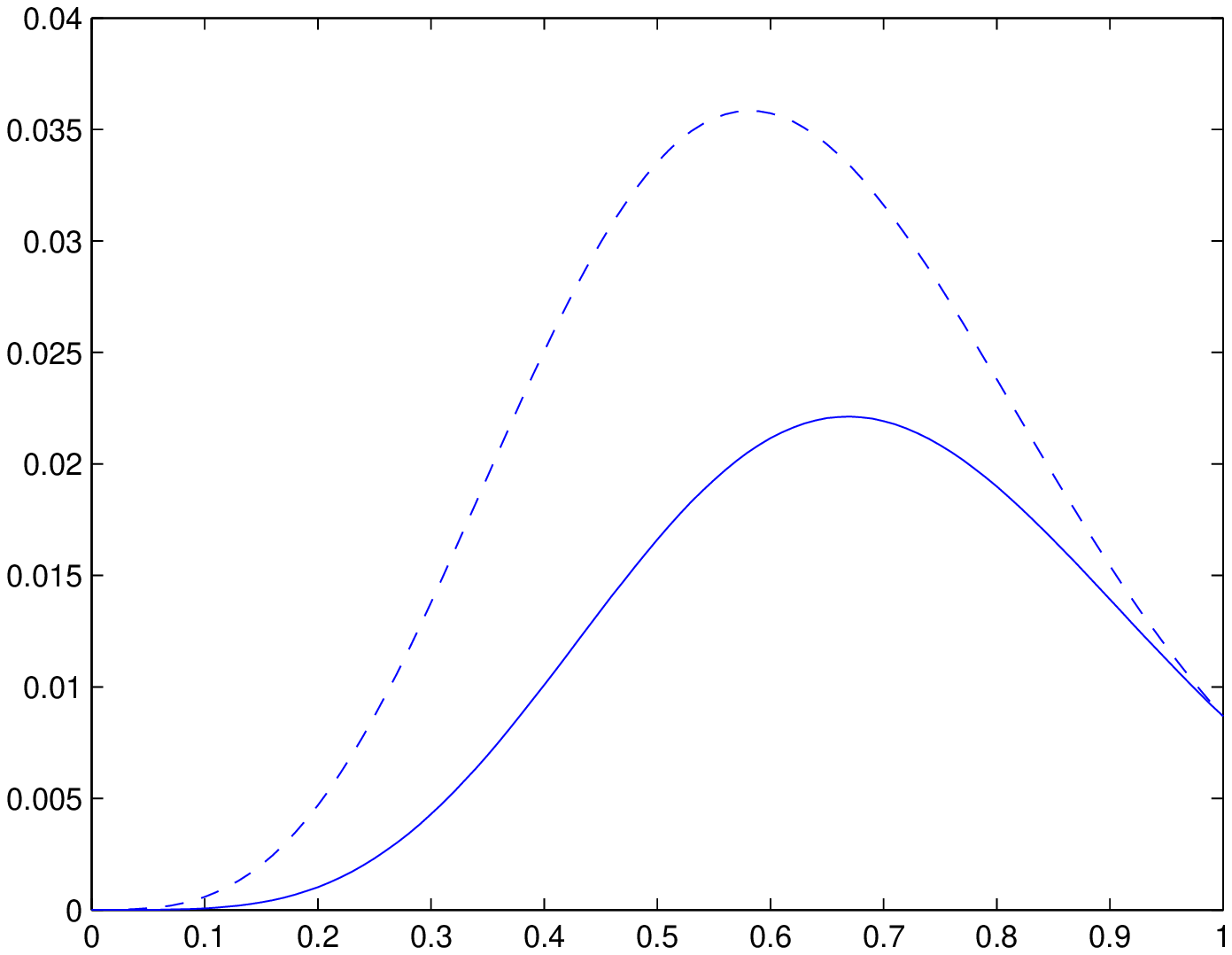} 
\caption{The covariance for $G(6,p)$. The dashed curve represents the annealed case and the continous one the quenched case.}\label{bild6}
\end{figure}

\newcommand\webcite[1]{\hfil  
   \penalty0 
\texttt{\def~{{\tiny$\sim$}}#1}\hfill\hfill}

\newcommand\arXiv[1]{\webcite{arXiv:#1.}}

\end{document}